\documentclass[final,10pt]{amsart}
\usepackage{amsmath,amsfonts,amssymb,amscd,verbatim}
\usepackage[margin=0.4in]{geometry}
\usepackage{fullpage}
\usepackage{enumerate}

\usepackage{bm}
\usepackage{xcolor}
\usepackage{enumitem}

\setlength\tabcolsep{1.5pt}


\numberwithin{equation}{section}
\theoremstyle{plain}
\newtheorem{thm}{Theorem} 
\newtheorem{cor}[thm]{Corollary}

\newtheorem{theorem}[equation]{Theorem} 

\newtheorem{corollary}[equation]{Corollary} 
\newtheorem{lemma}[equation]{Lemma}

\theoremstyle{definition}
\newtheorem{definition}[equation]{Definition} 
 
\newtheorem{example}[equation]{Example}

\DeclareMathOperator\ann{l{.}ann}
\DeclareMathOperator\Aut{Aut}
\DeclareMathOperator\chr{char}

\DeclareMathOperator\diag{diag}

\DeclareMathOperator\Hom{Hom}

\DeclareMathOperator\ord{ord}


\newcommand\NN{\mathbb N}

\newcommand\ZZ{\mathbb Z}

\newcommand\cR{\mathcal R}

\newcommand\bp{\mathbf p}
\newcommand\bx{\mathbf x}

\newcommand\gu{\underline g}
\newcommand\cu{\underline \chi}

\newcommand\kk{\Bbbk}

\newcommand\ep{\varepsilon}

\renewcommand{\int}{\mathrm{int}}
\newcommand\inv{^{-1}}

\newcommand\iso{\cong}

\newcommand\tensor{\otimes}
\newcommand\tornado{\xi}

\newcommand\grp[1]{\langle #1 \rangle}

\title{Prime and semiprime quantum linear space smash products}
\author[Gaddis]{Jason Gaddis}
\address{Miami University, Department of Mathematics, 301 S. Patterson Ave., Oxford, Ohio 45056} 
\email{gaddisj@miamioh.edu}

\subjclass[2010]{16S40,16N60}
\keywords{Quantum linear space, prime ring, semiprime ring, pointed Hopf algebra, smash product}

\begin{document}

\begin{abstract}
Bosonizations of quantum linear spaces are a large class of pointed Hopf algebras that include the Taft algebras and their generalizations. We give conditions for the smash product of an associative algebra with a bosonization of a quantum linear space to be (semi)prime. These are then used to determine (semi)primeness of certain smash products with quantum affine spaces. This extends Bergen's work on Taft algebras.
\end{abstract}

\maketitle

\section{Introduction}

Throughout, $\kk$ is an algebraically closed, characteristic zero field.
All algebras are associative, unital $\kk$-algebras. All tensor products are assumed to be over $\kk$.

An important property to study for any family of algebras
is how the prime and semiprime conditions are behaved with respect to various extensions. 
For example, Ore extensions of prime rings are prime \cite[Theorem 2.9]{MR}
and group rings $\kk G$ are prime if and only if $G$ has no nonidentity finite normal subgroup \cite[Theorem 1]{Pgrp}.
More generally, many authors have considered the question of (semi)primeness for smash products $A\#H$, where $H$ is a Hopf algebra and $A$ an $H$-module algebra \cite{B,CM,CF,FM,LM,lomp2,LPass,SVO}. The interested reader is directed to the survey article by Lomp for a thorough account of the state of the art with regards to semiprime smash products \cite{lomp}.
This paper concerns the question of (semi)prime extensions for certain pointed Hopf algebras $H$. In particular, we build on the work of Bergen who studied this question when $H$ is the Taft algebra \cite{B}. 

The Taft algebras were originally defined by Earl Taft \cite{T} in the study of finite-dimensional Hopf algebras whose antipode had arbitrarily large order. They are among the simplest examples within the classification of finite-dimensional pointed Hopf algebras by Andruskiewitsch and Schneider \cite{ASqls,AS1}. Pointed Hopf algebras have also been studied recently in the area of \emph{quantum symmetry}, or actions of Hopf algebras on noncommutative algebras \cite{cline,CG,EW1,EW2,GWY,KW}.




We begin by reviewing basic definitions and notions for our study, as well as a statement of our main results.

For a group $G$, we denote the \emph{character group} of $G$ by $\widehat{G}=\Hom(G,\kk^\times)$.
Throughout, all groups will be finite abelian and hence $\chi(g)$ is a root of unity for all $\chi \in \widehat{G}$ and all $g \in G$. We denote by $\ord(\chi(g))$ the order of $\chi(g)$ as an element in group $\kk^\times$.

Let $H$ be a Hopf algebra and $G(H)$ its set of grouplikes. Recall that $\Delta(g)=g \tensor g$ for all $g \in G(H)$. For $g,h \in G(H)$, an element $x \in H$ is \emph{$(g,h)$-skew-primitive} if $\Delta(x)=g \tensor x + x \tensor h$.

\begin{definition}[\cite{ASqls}]
\label{defn.boson}
Let $\theta \in \NN$, $G$ a finite abelian group,
$\gu = g_1,\hdots,g_\theta \in G$, and $\cu = \chi_1,\hdots,\chi_\theta \in \widehat{G}$.
Set $m_i=\ord(\chi_i(g_i)) \geq 2$.
A \emph{bosonization of a quantum linear space over $G$} (QLS for short) is the Hopf algebra $B(G,\gu,\cu)$ generated over $\kk$ by grouplikes $G$ and the $(g_i,1)$-skew-primitive elements $x_1,\hdots,x_\theta$ with relations
\[ gx_i = \chi_i(g)x_ig, \qquad x_i^{m_i} = 0, \qquad x_ix_j = \chi_j(g_i)x_jx_i,\]
for all $g \in G$ and all $i,j \in \{1,\hdots,\theta\}$ with $i \neq j$. 
We refer to $\theta$ as the \emph{rank} of $B(G,\gu,\cu)$.
\end{definition}
\noindent Note that the set $\gu$ appearing above need not be a generating set for $G$. Indeed, we even allow repetitions amongst the $g_i$.

Formally, a \emph{quantum linear space} $\cR(G,\gu,\cu)$ is the braided Hopf algebra in  ${}^{\kk G}_{\kk G} \mathcal{YD}$ generated by the $x_i$ with the relations above. This is in fact a sub-Hopf algebra of the $B(G,\gu,\cu)$ defined above. However, in order to avoid excessive terminology we reserve the acronym QLS for the $B(G,\gu,\cu)$.

Within the class of QLS, we single out the rank 1 case for particular attention.

\begin{definition}
Let $\alpha \in \kk$, $n,m \in \NN$ such that $m \mid n$, and let
$\lambda$ be a primitive $m$th root of unity.
The \emph{generalized Taft algebra} $T_n(\lambda,m,\alpha)$ is generated by a grouplike
element $g$ and a $(g,1)$-skew primitive element $x$ subject to the relations
\[ g^n = 1, \quad x^m = \alpha(g^m-1), \quad gx=\lambda xg.\]
\end{definition}
\noindent 
The (standard) Taft algebra is a generalized Taft algebra in the case $m=n$.
For a QLS $B(G,\gu,\cu)$, we denote by $B_i$ the subalgebra of $B$ generated by $\{g_i,x_i\}$.
It is clear that $B_i \iso T_{|g_i|}(\lambda_i,m_i,0)$ where $\lambda_i=\chi_i(g_i) \in \kk^\times$ has order $m_i$.

This paper is primarily concerned with studying smash products between an algebra and a QLS. This may be thought of as extending Bergen's work to higher rank and in many cases we are able to inductively build results starting in rank 1. On the other hand, we allow $G$ to be an arbitrary finite abelian group (and not just a cyclic group), so in certain cases it will be advantageous to restate some of Bergen's results in terms of group characters.


\begin{definition}
\label{defn.smash}
We say a Hopf algebra $H$ \emph{acts on} an algebra $R$ (from the left)
if $R$ is a left $H$-module via $h\otimes r\mapsto h\cdot r$,
$h \cdot 1_R = \varepsilon(h)1_R$, and $h \cdot (rr') = \sum (h_1 \cdot r)(h_2 \cdot r')$
for all $h \in H$ and $r,r' \in R$.
Alternatively, we say $R$ is a \emph{(left) $H$-module algebra}.
In this case, the \emph{smash product algebra} $R\# H$ is $R \tensor H$ as a $\kk$-vector space, 
with elements denoted by $r\#h$ for $r \in R$ and $h \in H$, and multiplication given by
\[ (r\#h)(r'\#h') = \sum r(h_1 \cdot r')\#h_2h' \quad\text{for all } r,r' \in R \text{ and } h,h' \in H,\]
where the summand is written in Sweedler notation.
\end{definition}

Let $R$ be an algebra and $\sigma \in \Aut(R)$. A $\kk$-linear map $\delta:R \to R$ is a \emph{$\sigma$-derivation} if it obeys the twisted Leibniz rule: $\delta(rs)=\sigma(r)\delta(s)+\delta(r)s$ for all $r,s \in R$.
If $B$ is a QLS and $R$ a $B$-module algebra, then each $g_i$ induces an automorphism $\sigma_i$ of $R$ and each $x_i$ induces a $\sigma_i$-derivation $\delta_i$ of $R$.
To avoid excessive notation, we simply use $g_i$ and $x_i$, respectively, for the automorphisms/skew-derivations.
As such, we typically write $g \cdot r$ and $x_i \cdot r$ as $g(r)$ and $x_i(r)$, respectively.

We define the following invariant subalgebras of $R$,
\begin{align*}
	R^{\grp{x_i}} &= \{ r \in R : x_i(r)=0 \} & R^{\grp{g_i}} &= \{ r \in R : g_i(r)=r \} & & \\
	R^{x} &= \bigcap_{i=1}^\theta R^{\grp{x_i}} & R^G &= \bigcap_{i=1}^\theta R^{\grp{g_i}} & R^B = R^x \cap R^G.
\end{align*}
For $k \in \{1,\hdots,\theta\}$, we set
\[ R_k := \bigcap_{i=1}^k R^{\grp{x_i}} \]
so that $R_\theta = R^x$. Occasionally, we will let $R_0=R^{\grp{x_0}}=R$. 

For an ideal of $I$ of $R$, set $I^* = I \cap R^*$ where $*$ is one of the superscripts above.
A subset $A$ of $R$ is said to be \emph{$B$-stable} if $h(A) \subset A$ for all $h \in B$.
This is equivalent to $x_k(A) \subset A$ for all $k \in \{1,\hdots,\theta\}$ and $g(A) \subset A$ for all $g \in G$.
If $I$ is a $B$-stable ideal of $R$, then $I^B$ is an ideal of $R^B$.

Throughout, unless otherwise stated, 
let $B=B(G,\gu,\cu)$ denote a rank $\theta$ QLS and $R$ a $B$-module algebra.
Set $S=R\#\cR(G,\gu,\cu)$ to be the subalgebra of $R\# B$ generated by
$r\# 1$ for all $r \in R$ and $1 \# x_k$ for all $k \in \{1,\hdots,\theta\}$.
Then $R\#B = S\# \kk G$ where $\kk G$ is the group algebra of $G$.
We frequently drop the $\#$ notation when writing elements, especially in $S$. For $\alpha \in \NN^\theta$, we denote by $x^\alpha$ the monomial $x_1^{\alpha_1} \cdots x_\theta^{\alpha_\theta}$. Thus, we may regard elements in $S$ as polynomials in the $x^\alpha$ with coefficients in $R$.

For $k \in \{1,\hdots,\theta\}$, set $S_k$ to be the subalgebra of $S$ generated by $R$ and $\{x_1,\hdots,x_k\}$, with $S_0=R$. That is, for $k < \theta$, $S_{k+1} = S_k\#\cR(G,\{g_{k+1}\},\{x_{k+1}\})$ where the action of $x_{k+1}$ is extended to $S_k$ by setting $x_{k+1}(x_i)=0$ for $i \leq k$.

In Bergen's work, {\it pseudo-traces} are defined in terms of the root of unity associated to a particular Taft algebra. In order to work in more generality, we will reinterpret this in terms of characters. However, the idempotents appearing in Lemma \ref{lem.ideps} below are distinct from those in \cite{B}.

\begin{lemma}
\label{lem.ideps}
For each $\chi \in \widehat{G}$, define
\[ e_\chi = \frac{1}{|G|} \sum_{g \in G} \chi(g) g \in B.\]
\begin{enumerate}
\item The $e_\chi$ are orthogonal idempotents of $B$ such that $\sum_{\chi \in \widehat{G}} e_\chi = 1$.
\item For all $h \in G$, $h e_\chi = e_\chi h = \chi(h\inv) e_\chi$.
\item For all $x_i$, $e_\chi x_i = x_i e_{\chi \tensor \chi_i}$.
\end{enumerate}
\end{lemma}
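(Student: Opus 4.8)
The plan is to verify each of the three assertions by direct computation in the group algebra $\kk G \subset B$, using the orthogonality relations for characters of the finite abelian group $G$. Recall that since $\kk$ is algebraically closed of characteristic zero and $G$ is finite abelian, the characters $\widehat{G}$ form an orthonormal basis (up to the factor $|G|$) in the sense that $\frac{1}{|G|}\sum_{g \in G} \chi(g)\psi(g\inv) = \delta_{\chi,\psi}$, and dually $\frac{1}{|G|}\sum_{\chi \in \widehat{G}} \chi(g) = \delta_{g,1}$. These two facts are the only nontrivial inputs; everything else is bookkeeping with the defining relation $g x_i = \chi_i(g) x_i g$.

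**Part (1) and (2).** First I would prove (2), since (1) follows from it. For $h \in G$, compute
\[ h e_\chi = \frac{1}{|G|}\sum_{g \in G} \chi(g)\, hg = \frac{1}{|G|}\sum_{g' \in G} \chi(h\inv g')\, g' = \chi(h\inv) e_\chi, \]
by reindexing $g' = hg$ and using that $\chi$ is a homomorphism; since $G$ is abelian the same reindexing gives $e_\chi h = \chi(h\inv) e_\chi$, so $h$ and $e_\chi$ commute. For orthogonality and idempotency, expand
\[ e_\chi e_\psi = \frac{1}{|G|}\sum_{g \in G} \chi(g)\, g\, e_\psi = \frac{1}{|G|}\sum_{g \in G} \chi(g)\psi(g\inv)\, e_\psi = \delta_{\chi,\psi}\, e_\psi, \]
using part (2) in the middle step and the first orthogonality relation at the end. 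Summing over $\chi$,
\[ \sum_{\chi \in \widehat{G}} e_\chi = \frac{1}{|G|}\sum_{g \in G}\Big(\sum_{\chi \in \widehat{G}} \chi(g)\Big) g = \frac{1}{|G|}\sum_{g \in G} |G|\,\delta_{g,1}\, g = 1, \]
using the second (dual) orthogonality relation.

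**Part (3).** Here I would push $x_i$ past $g$ inside the sum. Using $g x_i = \chi_i(g) x_i g$,
\[ e_\chi x_i = \frac{1}{|G|}\sum_{g \in G} \chi(g)\, g x_i = \frac{1}{|G|}\sum_{g \in G} \chi(g)\chi_i(g)\, x_i g = x_i \cdot \frac{1}{|G|}\sum_{g \in G} (\chi \cdot \chi_i)(g)\, g = x_i\, e_{\chi \cdot \chi_i}, \]
where $\chi \cdot \chi_i$ is the product in $\widehat{G}$, written $\chi \tensor \chi_i$ in the statement. This is immediate once the $g x_i$ relation is applied termwise.

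**Expected obstacle.** There is no real obstacle; the only thing to be careful about is the consistent use of $\chi(g\inv) = \chi(g)\inv$ and the distinction between the two orthogonality relations (sum over $g$ for fixed pair of characters, versus sum over $\chi$ for fixed $g$). One should also note that the computations in (1)–(2) take place entirely in $\kk G$, so commutativity of $G$ is genuinely used, whereas (3) is the only place the skew-commutation relations enter. I would present (2) first, then derive (1) from it, then (3), keeping each displayed computation on a single line to avoid any ambiguity.
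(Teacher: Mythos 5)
Your proposal is correct and follows essentially the same route as the paper: the paper dismisses (1) and (2) as well-known and proves (3) by exactly the computation you give (pushing $x_i$ past $g$ via $gx_i=\chi_i(g)x_ig$ and recognizing $e_{\chi\tensor\chi_i}$). Your only addition is to write out the standard character-orthogonality verification of (1) and (2), which is fine and changes nothing of substance.
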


\begin{proof}
The first two statements are well-known.
For (3) we need only note the following,
\[ e_\chi x_i 
	= \frac{1}{|G|} \sum_{g \in G} \chi(g) gx_i
	= \frac{1}{|G|} \sum_{g \in G} \chi(g) \chi_i(g)x_i g
	= x_i \frac{1}{|G|} \sum_{g \in G} (\chi \tensor \chi_i)(g) g
	= x_i e_{\chi \tensor \chi_i}. \qedhere\]
\end{proof}

For a rank $\theta$ QLS we set \[ \bx = \prod_{i=1}^\theta x_i^{m_i-1}\] and
for each $\chi \in \widehat{G}$ we define the element $t_\chi \in B$ by 
\[ t_\chi := e_\chi \bx.\]
Throughout, we let $\chi_0$ denote the trivial representation, $e_0 = e_{\chi_0}$, and $t_0 = t_{\chi_0}$.
An easy computation shows that $gt_0 = t_0 = \ep(g)t_0$ for all $g \in G$ and
$x_i t_0 = 0 = \ep(x_i)t_0$ for all $i$ so that $t_0$ is a left integral for $B$.

We establish two main results in this work. The first concerns semiprimeness of smash products.

\begin{thm}[Theorem {\ref{thm.sp1}}]
\label{thm.intro1}
The smash product $R\# B$ is semiprime if and only if, after possibly reordering,
$\bigcap_{i=1}^\theta R^{\grp{x_i}}$ is semiprime,
$x_\theta(L) \neq 0$ for all nonzero left ideals $L$ of $R$,
and for all $k \in \{1,\hdots,\theta-1\}$,
$x_k(L) \neq 0$ for all nonzero left ideals $L$ of
$\bigcap_{j=k+1}^\theta R^{\grp{x_j}}$.
\end{thm}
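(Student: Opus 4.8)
The plan is to induct on the rank $\theta$, peeling off one skew-primitive generator at a time using the iterated Ore-extension-like structure $S_{k+1}=S_k\#\cR(G,\{g_{k+1}\},\{x_{k+1}\})$ together with the factorization $R\#B=S\#\kk G$. The base case $\theta=1$ is essentially Bergen's rank~1 result, but I would reprove it intrinsically in the language of this paper: here $R\#B=R\#T_{|g_1|}(\lambda_1,m_1,0)$, and I would show that $R\#B$ is semiprime iff $R^{\grp{x_1}}$ is semiprime and $x_1(L)\neq 0$ for every nonzero left ideal $L$ of $R$. For the group-algebra layer, the key point is that passing from $S$ to $S\#\kk G$ changes semiprimeness in a controlled way because $\kk G$ is semisimple (Maschke, $\chr\kk=0$): $S\#\kk G$ is semiprime iff $S$ is $G$-semiprime (no nonzero nilpotent $G$-stable ideals), and one then uses the idempotents $e_\chi$ of Lemma~\ref{lem.ideps} to decompose ideals. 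So the crux reduces to analyzing semiprimeness of $S=R\#\cR(G,\gu,\cu)$ as an iterated skew-polynomial-type extension.

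For the inductive step, write $S_\theta = S_{\theta-1}\#\cR(G,\{g_\theta\},\{x_\theta\})$, where $x_\theta$ acts on $S_{\theta-1}$ as a $g_\theta$-skew derivation with $x_\theta(x_i)=0$ for $i<\theta$ and $x_\theta^{m_\theta}=0$. This is a "truncated" skew polynomial extension. I would prove a lemma of the following shape: for an algebra $T$ with a finite-order automorphism $\sigma$ and a $\sigma$-derivation $\delta$ with $\delta^m=0$ and the appropriate commutation with $\sigma$, the extension $T\#\cR$ (i.e. $T[y;\sigma,\delta]/(y^m)$ with the skew relations) is semiprime iff $T^{\grp{\delta}}=\ker\delta$ is semiprime and $\delta(L)\neq 0$ for every nonzero left ideal $L$ of $T$. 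Granting this lemma, apply it with $T=S_{\theta-1}$: $S_\theta$ is semiprime iff $S_{\theta-1}^{\grp{x_\theta}}$ is semiprime and $x_\theta(L)\neq 0$ for all nonzero left ideals $L$ of $S_{\theta-1}$. The condition on left ideals of $S_{\theta-1}$ must then be pushed down to $R$: I claim $x_\theta(L)\neq 0$ for all nonzero left ideals $L$ of $S_{\theta-1}$ is equivalent to $x_\theta(L)\neq 0$ for all nonzero left ideals $L$ of $R$, using that a nonzero left ideal of $S_{\theta-1}$ contains (via leading/lowest coefficients in the $x_1,\dots,x_{\theta-1}$ filtration) a nonzero left ideal of $R$ that interacts compatibly with $x_\theta$ since $x_\theta$ kills $x_1,\dots,x_{\theta-1}$. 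Similarly $S_{\theta-1}^{\grp{x_\theta}} = (R^{\grp{x_\theta}})\#\cR(G,g_1,\dots,g_{\theta-1},\chi_1,\dots,\chi_{\theta-1})$ because $x_\theta$ commutes appropriately with the other $x_i$, so semiprimeness of $S_{\theta-1}^{\grp{x_\theta}}$ is governed by the rank $\theta-1$ statement applied to the $R$-module algebra $R^{\grp{x_\theta}}$ — and intersecting the invariant subalgebras gives exactly $\bigcap_{i=1}^{\theta-1}(R^{\grp{x_\theta}})^{\grp{x_i}} = \bigcap_{i=1}^\theta R^{\grp{x_i}}$, matching the statement. The "after possibly reordering" clause accounts for the asymmetry between $x_\theta$ (tested on all of $R$) and the $x_k$ (tested on the successive invariant subalgebras); one checks the conditions are independent of which generator is peeled last.

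The main obstacle is the key lemma on truncated skew-derivation extensions — the engine of the induction. Unlike the genuine Ore extension case, $y^m=0$ makes $T\#\cR$ never a domain and forces a careful nilpotency analysis: given a nonzero ideal $I$ of $T\#\cR$ with $I^2=0$, one extracts from its elements of minimal $y$-degree a nonzero $\delta$-stable (or at least well-behaved) left ideal of $T$, and must derive a contradiction from either $T^{\grp{\delta}}$ being semiprime or $\delta$ being "left-faithful" on left ideals. The technical heart is showing that the lowest-degree coefficients of elements of $I$, together with their $\delta$-images, generate something that either lands in $\ker\delta$ (contradicting semiprimeness of $\ker\delta$) or forms a nonzero left ideal $L$ of $T$ with $\delta(L)=0$ (contradicting the faithfulness hypothesis) — this is where Bergen's pseudo-trace / integral element argument, reworked here via $t_\chi=e_\chi\bx$ and the fact that $t_0$ is a left integral, does the real work, and getting the bookkeeping right across the $G$-action is the delicate part.
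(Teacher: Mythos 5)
Your proposal follows essentially the same route as the paper: reduce to $S$ via the Maschke/Fisher--Montgomery argument (Lemma \ref{lem.sp1}), peel off one generator at a time through $S=S_{\theta-1}\#\cR(G,\{g_\theta\},\{x_\theta\})$, apply the rank-one (Bergen-type) semiprimeness criterion, push the left-ideal condition from $S_{\theta-1}$ down to $R$ by a leading-coefficient argument, and identify $S_{\theta-1}^{\grp{x_\theta}}$ with $R^{\grp{x_\theta}}\#\cR(G,\{g_1,\hdots,g_{\theta-1}\},\{x_1,\hdots,x_{\theta-1}\})$ to iterate. The only divergence is that the paper takes the rank-one engine directly from Bergen's proof of \cite[Theorem 4]{B} (Lemma \ref{lem.sp2}) rather than reproving it, so the ``key lemma'' you flag as the main obstacle requires no new work.
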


Theorem \ref{thm.intro1} reduces to \cite[Theorem 4]{B} in the case that $B$ is a Taft algebra.
Similarly, we extend Bergen's result \cite[Theorem 8]{B} to the case of an arbitrary QLS.

\begin{thm}[Theorem {\ref{thm.prime1}}]
\label{thm.intro2}
The smash product $R\#B$ is prime if and only if $t_\chi(L)$ has zero left annihilator
for every $B$-stable left ideal $L \neq 0$ of $R$ and $\chi \in \widehat{G}$.
\end{thm}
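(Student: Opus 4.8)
The plan is to proceed by induction on the rank $\theta$, using the iterated structure $S_{k+1} = S_k \# \cR(G,\{g_{k+1}\},\{x_{k+1}\})$ together with the outer layer $R\#B = S\#\kk G$. The base case $\theta = 1$ is essentially a characterization of primeness for a generalized Taft smash product $R\#T_n(\lambda,m,0)$, which amounts to recasting Bergen's \cite[Theorem 8]{B} in terms of the idempotents $e_\chi$ and the pseudo-trace elements $t_\chi = e_\chi \bx$ rather than the root-of-unity pseudo-traces of \cite{B}. Concretely, I would first establish for $\theta=1$ that $R\#T$ is prime iff $t_\chi(L)$ has zero left annihilator in $R$ for every $B$-stable left ideal $L\neq 0$, by analyzing ideals of $R\#T$ through their intersections with $R$ and exploiting that $\bx = x^{m-1}$ generates (up to the $e_\chi$'s) the left integrals.

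For the inductive step I would do two things. First, reduce from $R\#B$ to a group-graded problem: since $R\#B = S\#\kk G$ with $G$ finite abelian, decompose $1 = \sum_{\chi\in\widehat G} e_\chi$ via Lemma \ref{lem.ideps}, so that $e_\chi (R\#B) e_\psi$-type pieces control the ideal structure; primeness of $R\#B$ should be equivalent to a primeness/faithfulness statement for $S$ together with the interaction of the $G$-action, packaged exactly so that the left integral $t_0$ of $B$ and its $G$-translates $t_\chi$ appear as the relevant "trace" elements. The key classical input here is the standard fact (Lomp's survey \cite{lomp}, or Cohen--Fishman type arguments) that for a finite-dimensional Hopf algebra $H$ with left integral $t$, primeness of $R\#H$ is detected by the nonvanishing of annihilators of $t\cdot L$ for $H$-stable left ideals $L$; I would prove the QLS-specific version of this, tracking that the $B$-stable left ideals of $R$ are precisely those closed under all $x_k$ and all $g\in G$. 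Second, I would feed the rank-$(\theta-1)$ statement for $S_{\theta-1}$ (viewed as an $\cR(G,\{g_\theta\},\{x_\theta\})$-module algebra, with the induced $\sigma_\theta$-derivation) through the rank-1 result to climb from $\theta-1$ to $\theta$, checking that the composite of the "zero left annihilator of $t_\chi(L)$" conditions at each level collapses to the single stated condition because $\bx = \prod x_i^{m_i-1}$ factors through the tower and $e_\chi \bx$ tracks the characters multiplicatively via Lemma \ref{lem.ideps}(3).

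The main obstacle I expect is the bookkeeping in the inductive step: an ideal $P$ of $R\#B$ need not be of the form $I\#B$ for $I$ an ideal of $R$, so one must work with $P\cap R$ (a $B$-stable ideal) and control how $P$ is recovered from $P\cap R$. The standard device is to take a nonzero ideal $P$ with $P\cap R = 0$ (assuming $R\#B$ not prime) and derive a contradiction by picking an element of $P$ of minimal "$x$-degree" in the $x^\alpha$-expansion, hitting it with suitable $x_k$'s and $e_\chi$'s to land a nonzero element back in $R$ whose left annihilator contains $t_\chi(L)$ for an appropriate $B$-stable $L$. Making the minimal-degree argument work uniformly across all characters $\chi$ — rather than just the trivial one — and verifying that the element one produces genuinely annihilates $t_\chi(L)$ on the left (using the $q$-commutation relations $x_i x_j = \chi_j(g_i) x_j x_i$ and $gx_i = \chi_i(g)x_i g$ to move everything into standard form) is where the real work lies. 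The converse direction (that the annihilator condition forces primeness) is comparatively routine: given $B$-stable ideals whose product vanishes, one restricts to $R$, uses that $t_\chi(L)$ spans enough of $R$ modulo annihilators, and concludes.
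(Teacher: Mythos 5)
Your plan diverges from the paper's proof in a way that leaves the central step unproved. The paper does not induct on the rank at all: it proves Theorem \ref{thm.prime1} directly for arbitrary $\theta$ by (i) a structural lemma (Lemma \ref{lem.stble}) showing that any nonzero two-sided ideal $I$ of $R\#B$ contains a set $Lt_\chi$ with $L\neq 0$ a $B$-stable left ideal of $R$ — obtained by right-multiplying $I$ by a monomial $x^\alpha$ of maximal degree with $Ix^\alpha\neq 0$, so that $Ix^\alpha\subset (R\#\kk G)\bx$, and then cutting with the idempotents $e_\chi$; (ii) the automorphisms $\pi_\chi$ of $R\#B$ (identity on $R$ and the $x_i$, $h\mapsto\chi(h)h$ on $G$) to normalize one of the two characters to $\chi_0$; and (iii) the left-integral identity $t_\chi r\, t_{\chi_0}=t_\chi(r)\,t_{\chi_0}$, which gives the implication ``prime $\Rightarrow$ annihilator condition'' by showing that $a t_\chi$ left-annihilates the nonzero left ideal $Lt_{\chi_0}$ whenever $at_\chi(L)=0$. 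Your proposal contains no substitute for (i)--(iii); in particular you flag the problem of ``making the argument work uniformly across all characters'' but offer no device like $\pi_\chi$, and you never address the prime-$\Rightarrow$-annihilator direction, for which the integral identity is the whole point.

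The genuine gap is the inductive step itself. Applying a rank-one version of the theorem to $S_{\theta-1}\#B(G,\{g_\theta\},\{\chi_\theta\})$ produces a criterion phrased in terms of $B_\theta$-stable left ideals of the \emph{intermediate algebra} $S_{\theta-1}$ and left annihilators computed \emph{inside} $S_{\theta-1}$, with pseudo-trace $e_\chi x_\theta^{m_\theta-1}$; the theorem you must prove is phrased in terms of $B$-stable left ideals of $R$ and annihilators in $R$, with $t_\chi=e_\chi\bx$. You assert that these conditions ``collapse'' down the tower, but that is exactly where the content lies and it is not routine: the leading-coefficient trick that works in the semiprime case (Theorem \ref{thm.sp1}) transports conditions of the form ``$x_k(L)\neq 0$,'' whereas annihilator conditions do not pass to leading coefficients, and $e_\chi$ acting on $S_{\theta-1}$ projects onto $G$-weight spaces that mix the weights of the coefficients in $R$ with those of the monomials $x^\alpha$ (since $g(x_i)=\chi_i(g)x_i$). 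Without an argument here the induction does not close. Two further inaccuracies: for primeness you cannot argue from a single ideal $P$ with $P\cap R=0$ — you must handle a pair of nonzero ideals $I,J$ with $IJ=0$, whose intersections with $R$ may well both vanish (this is precisely why Lemma \ref{lem.stble} produces $Lt_\chi\subset I$ rather than working with $I\cap R$); and your two concluding paragraphs describe the same implication (annihilator condition $\Rightarrow$ prime) twice, once as hard and once as routine, while the other implication is never argued.
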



In practice, it will be easier to apply the following corollary to domains

\begin{cor}[Corollary {\ref{cor.prime3}}]
The smash product $R\#B$ is prime if and only $t_\chi(R) \neq 0$ for all $\chi \in \widehat{G}$
if and only if, after possibly reordering, $x_{k+1}(R_k) \neq 0$ for $k \in \{0,\hdots,\theta-1\}$
and for each $\chi \in \widehat{G}$ there exists $a \in R^x$ such that $g(a) = \chi(g\inv) a$ for all $g \in G$.
\end{cor}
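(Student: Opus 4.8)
The plan is to read everything off of Theorem \ref{thm.prime1}. Since $R$ is a domain, every invariant subalgebra $R^{*}$ is a domain as well, so for a subset $X$ of any such subalgebra the left annihilator of $X$ vanishes exactly when $X\neq 0$. Thus Theorem \ref{thm.prime1} says that $R\#B$ is prime if and only if $t_\chi(L)\neq 0$ for every nonzero $B$-stable left ideal $L$ of $R$ and every $\chi\in\widehat{G}$, and taking $L=R$ gives one direction of the first stated equivalence for free. The substance of the first equivalence is therefore the implication: if $t_\chi(R)\neq 0$ for all $\chi$, then $t_\chi(L)\neq 0$ for every nonzero $B$-stable left ideal $L$.

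To prove that I would pass to invariants inside $L$. Given $0\neq\ell\in L$, pick a multi-index $\alpha$ of maximal length with $x^\alpha(\ell)\neq 0$; since $x_i x^\alpha$ is a nonzero scalar multiple of $x_1^{\alpha_1}\cdots x_i^{\alpha_i+1}\cdots x_\theta^{\alpha_\theta}$ and $x_i^{m_i}=0$, the element $x^\alpha(\ell)$ is a nonzero element of $R^x\cap L$. As $R^x$ is $G$-stable and $\kk G$ is semisimple, $R^x\cap L$ contains a nonzero $G$-eigenvector $\ell''$ with $g(\ell'')=\psi(g\inv)\ell''$ for some $\psi\in\widehat{G}$. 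Because $x_i(\ell'')=0$ for all $i$, a Sweedler-notation computation — isolating the only term of $\Delta(\bx)$ whose left tensorand is grouplike, namely $\prod_i g_i^{m_i-1}\tensor\bx$ — shows $\bx\cdot(r\ell'')=c\,(\bx\cdot r)\,\ell''$ for a nonzero scalar $c$, and then Lemma \ref{lem.ideps} yields $t_\chi\cdot(r\ell'')=(t_{\chi\tensor\psi\inv}\cdot r)\,\ell''$. By hypothesis I may choose $r$ with $t_{\chi\tensor\psi\inv}\cdot r\neq 0$; then $r\ell''\in L$ and, $R$ being a domain, $t_\chi\cdot(r\ell'')\neq 0$. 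This gives $t_\chi(L)\neq 0$, hence the first equivalence.

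For the second equivalence I would dissect $t_\chi(R)=e_\chi\bx(R)$. From $x_i\bx=0$ and $g\bx=(\prod_i\chi_i(g)^{m_i-1})\,\bx g$ in $B$ one sees that $\bx(R)$ is a $G$-stable subspace of $R^x$ whose $e_\chi$-isotypic component is precisely $t_\chi(R)$; so $t_\chi(R)\subseteq\{a\in R^x:g(a)=\chi(g\inv)a\text{ for all }g\}$ and $\bx(R)=\bigoplus_\chi t_\chi(R)$. Hence $t_\chi(R)\neq 0$ for all $\chi$ forces both the existence of the eigenvectors $a$ and $\bx(R)\neq 0$. Conversely, a coproduct computation (again picking off the grouplike-left part of $\Delta(\bx)$) shows $\bx(R)$ is a left $R^x$-module, so from a nonzero eigenvector $b\in\bx(R)$ of weight $\psi$ and, for a target $\chi$, a nonzero $a\in R^x$ of weight $\chi\tensor\psi\inv$ — supplied by hypothesis — the product $ab$ is a nonzero (domain!) element of the weight-$\chi$ part of $\bx(R)$, i.e. of $t_\chi(R)$. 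Finally, to identify ``$\bx(R)\neq 0$'' with the chain condition, write $\bx$ (after reordering the $x_i$) as the operator $x_1^{m_1-1}\circ\cdots\circ x_\theta^{m_\theta-1}$: the partial composites have image in $\bigcap_{j\geq k}R^{\grp{x_j}}$, which gives one direction, while the other iterates the rank-one fact that on a domain a nonzero $\sigma$-derivation $\partial$ with $\partial^m=0$ satisfies $\partial^{m-1}\neq 0$ (proved by the $q$-Leibniz identity $\partial^k(c^k)=[k]_q!\,u_k$ with $u_k\neq 0$ for $k<m$, for the pertinent root of unity $q$) through the nested subalgebras $R_k$, after reindexing.

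The hard part is this last identification: propagating the nilpotency lemma through the subspaces $\bx$ carves out inside the nested invariants, and keeping the two reorderings consistent — the one that puts $\bx$ in a convenient composite order, and the one that matches the indexing $R_k=\bigcap_{i\le k}R^{\grp{x_i}}$ used in the statement. By contrast, the coproduct identity $t_\chi\cdot(r\ell'')=(t_{\chi\tensor\psi\inv}\cdot r)\,\ell''$, which is what reduces an arbitrary $B$-stable ideal to $R$ itself, is a short computation once the grouplike part of $\Delta(\bx)$ is isolated.
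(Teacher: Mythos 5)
Your route is genuinely different from the paper's, and most of it works. The paper proves the first equivalence by passing through Corollary \ref{cor.prime2} (hence through the Bergman--Isaacs/Bergen--Grzeszczuk lemmas and Yanai's theorem), and proves $(3)\Rightarrow(2)$ using Yanai again together with Lemma \ref{lem.ideals0}; you instead exploit the domain hypothesis directly. Your reduction of an arbitrary nonzero $B$-stable left ideal $L$ to $R$ itself --- producing $x^\alpha(\ell)\in R^x\cap L$ by maximality of $\alpha$, extracting a $G$-eigenvector $\ell''$, and using $t_\chi\cdot(r\ell'')=(t_{\chi\tensor\psi\inv}\cdot r)\,\ell''$ --- is correct and is a nice, more self-contained substitute for the paper's appeal to \cite[Theorem 2]{yanai}; likewise your eigenvector-multiplication argument replaces \cite[Theorem 6]{yanai} in $(3)\Rightarrow(2)$. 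One small slip: in computing $\bx\cdot(r\ell'')$ with $\ell''\in R^x$ on the \emph{right}, the surviving term of $\Delta(\bx)$ is $\bx\tensor 1$ (so the constant is $c=1$); the term $\prod_i g_i^{m_i-1}\tensor\bx$ that you isolate is the one relevant for the later claim that $\bx(R)$ is a left $R^x$-module. The identity you state is still correct, so this is cosmetic.

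The genuine gap is exactly where you say the hard part lies, and the tool you name does not close it. To get $\bx(R)\neq 0$ from ``$x_{k+1}(R_k)\neq 0$ for all $k$ (after reordering)'' you must show at each stage that $x_{k+1}^{m_{k+1}-1}$ is nonzero on the iterated image $L_k=x_k^{m_k-1}\cdots x_1^{m_1-1}(R)$, which is only a left ideal of $R_k$ --- not on all of $R_k$. The rank-one fact you invoke (on a domain, $\partial\neq 0$ and $\partial^{m}=0$ imply $\partial^{m-1}\neq 0$) is the $L=R$ case and does not iterate: knowing $x_{k+1}(R_k)\neq 0$ and $L_k\neq 0$ does not by itself give $x_{k+1}^{m_{k+1}-1}(L_k)\neq 0$. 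What is needed is the left-ideal strengthening: first, in a domain, $x_{k+1}(R_k)\neq 0$ forces $x_{k+1}(L)\neq 0$ for \emph{every} nonzero left ideal $L$ of $R_k$ (the one-line Leibniz computation as in \eqref{eq.ann}), and then the $q$-Leibniz/minimal-$t$ argument must be run on the left ideal $L_k$ itself to conclude $x_{k+1}^{m_{k+1}-1}(L_k)\neq 0$. This is precisely Bergen's Lemma \ref{lem.nilp}, and the paper packages the iteration as Lemma \ref{lem.ideals0} (which you could simply cite, since every $R_k$ is a domain, hence semiprime). Your $q$-Leibniz identity $\partial^k(c^k)=[k]_q!\,u_k$ also needs $c$ chosen with $\partial^2(c)=0$ (take $c=\partial^{j-1}(c')$ for $j$ maximal with $\partial^j(c')\neq 0$); as stated for arbitrary $c$ with $\partial(c)\neq 0$ it is false. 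So the skeleton is right, but ``iterate the rank-one fact through the nested subalgebras'' is not yet a proof of the implication from the chain condition to $\bx(R)\neq 0$.
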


Beyond the self-contained ring-theoretic interests of these results, there is hope that this paper is a crucial step in providing criteria for actions of \emph{all} finite-dimensional pointed Hopf algebras.
As in \cite{GWY}, it is expected that these results will have applications in the study of invariants of algebras under the action of pointed Hopf algebras.

Our examples feature QLS actions on certain noncommutative algebras that we define now.
Further examples of QLS actions on quantum affine spaces are explored in \cite{CG}.

\begin{definition}
\label{defn.qas}
Let $\bp=(p_{ij}) \in M_n(\kk^\times)$ such that $p_{ii}=1$ and $p_{ij}=p_{ji}\inv$ for all $i \neq j$.
The \emph{quantum affine space} associated to $\bp$ is the $\kk$-algebra
\[ \kk_\bp[u_1,\hdots,u_n] := \kk\langle u_1,\hdots,u_n : u_iu_j=p_{ij}u_ju_i\rangle,\]
The simplest such algebra, when $n=2$, is simply called a 
\emph{quantum plane} and the single nontrivial parameter $p_{ij}$ is denoted by $p$.
\end{definition}

\section{Semiprimeness of smash products}

In this section we establish criteria for a smash product between
an associative algebra and a QLS to be semiprime.
Our first results generalize \cite[Theorem 4]{B} to the case of a QLS.
We begin with a basic result that will be necessary in our computation.

\begin{lemma}
\label{lem.nideal}
Let $R$ be a ring, $A$ a finite nonempty subset of $\Aut(R)$, and $I$ a nilpotent ideal of $R$.
Then $I' = \sum_{\sigma \in A} \sigma(I)$ is a nilpotent ideal of $R$.
\end{lemma}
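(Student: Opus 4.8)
The plan is to reduce the statement to two elementary facts about ideals. First I would observe that for any $\sigma \in \Aut(R)$ and any (two-sided) ideal $I$, the set $\sigma(I)$ is again a two-sided ideal: it is an additive subgroup, and for $r \in R$, $a \in I$ we have $r\sigma(a) = \sigma(\sigma\inv(r)a) \in \sigma(I)$ and similarly on the right, since $\sigma\inv(r)a, a\sigma\inv(r) \in I$. Consequently $I' = \sum_{\sigma \in A}\sigma(I)$, being a finite sum of ideals, is an ideal of $R$.

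Next I would verify that each $\sigma(I)$ is nilpotent. If $n \in \NN$ is chosen with $I^n = 0$, then since $\sigma$ is a ring homomorphism we get $(\sigma(I))^n = \sigma(I^n) = \sigma(0) = 0$. So $I'$ is a finite sum of nilpotent ideals, and it remains to show that such a sum is nilpotent.

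For this I would first handle the case of two nilpotent ideals $J, K$ with $J^m = 0$ and $K^n = 0$. Passing to $R/K$, the image $(J+K)/K$ is an ideal of $R/K$ satisfying $\bigl((J+K)/K\bigr)^m = (J^m + K)/K = 0$, so $(J+K)^m \subseteq K$, whence $(J+K)^{mn} = \bigl((J+K)^m\bigr)^n \subseteq K^n = 0$. Then, writing $A = \{\sigma_1,\dots,\sigma_r\}$ and applying this two-ideal statement inductively — at each stage the partial sum $\sigma_1(I) + \cdots + \sigma_j(I)$ is nilpotent by the inductive hypothesis and $\sigma_{j+1}(I)$ is nilpotent by the previous paragraph — we conclude that $I'$ is nilpotent.

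I do not expect any genuine obstacle here; the only point requiring care is the well-known fact that the sum of two nilpotent ideals is nilpotent, which the quotient argument above dispatches cleanly (one must be slightly careful that the nilpotency index of the sum may exceed those of the summands, so the bound $mn$ rather than $\max(m,n)$ is used).
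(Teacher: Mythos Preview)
Your proof is correct. The paper's argument is slightly different in flavor: rather than invoking the general fact that a sum of two nilpotent ideals is nilpotent via passage to a quotient, it argues directly with products. Expanding $(I')^n$ as a sum of products $\sigma_1(I)\cdots\sigma_n(I)$ with $\sigma_j\in A$, each such product is contained in every factor $\sigma_j(I)$ (since these are two-sided ideals); if all $\sigma_j$ coincide the product vanishes, and otherwise the product lies in $\sigma(I)$ for some $\sigma$ different from a fixed $\tau\in A$. Hence $(I')^n\subseteq\sum_{\sigma\in A\setminus\{\tau\}}\sigma(I)$, and one inducts on $|A|$. Your approach packages the same induction on $|A|$ but isolates the two-ideal step as the standard lemma ``$J^m=0$, $K^n=0\Rightarrow(J+K)^{mn}=0$'' proved by working in $R/K$; this is arguably cleaner and gives an explicit nilpotency bound $n^{|A|}$, while the paper's hands-on expansion avoids quotients altogether. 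Either way the content is the same elementary observation that a finite sum of nilpotent ideals is nilpotent.
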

\begin{proof}
Suppose $I^n = 0$ for some $n \in \NN$.
If $|A|=1$, then $I'=\sigma(I)$ for $\sigma \in A$ and clearly $I'$ is nilpotent, so assume $|A|>1$.

Set $W_k = \{ \sigma_1(I)\sigma_2(I)\cdots \sigma_k(I) : \sigma_i \in A\}$. Then
$(I')^n = \sum_{w \in W_n} w$. 
If $w \in W_n$ satisfies $\sigma_i=\sigma_j$ for all $i,j \leq n$ in its expansion, then $w=0$.
Moreover, $w \subset \sigma_i(I)$ for every $\sigma_i$ appearing in its expansion.
Choose $\tau \in A$. Then for every nonzero $w \in W_n$ in which $\tau$ appears in its expansion, $w \subset \tau'(I)$ for some $\tau' \in A$, $\tau' \neq \tau$. Set $A'= A\backslash\{\tau\}$. Then $(I')^n \subset \sum_{\sigma \in A'} \sigma(I)$. Continuing in this way we arrive at $(I')^m = \sigma(I)$ for some $\sigma \in A$ and some $m\gg 0$.
\end{proof}

\begin{lemma}
\label{lem.sp1}
The smash product $R\# B$ is semiprime if and only if $S$ is semiprime
\end{lemma}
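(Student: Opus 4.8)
The plan is to realize $R\# B$ as a smash product of $S$ with the finite-dimensional semisimple group algebra $\kk G$, and then invoke the general fact that when $H$ is a finite-dimensional semisimple (indeed separable) Hopf algebra, $A\# H$ is semiprime if and only if $A$ is. Concretely, as recorded in the excerpt, $R\# B = S\#\kk G$ where the $G$-action on $S$ is the restriction of the $B$-action; since $\operatorname{char}\kk = 0$ and $G$ is finite, $\kk G$ is semisimple. The backward direction ($S$ semiprime $\Rightarrow$ $R\# B$ semiprime) is then the "smash product with a semisimple group algebra preserves semiprimeness'' half of this dichotomy, which is classical (it goes back to Fisher–Montgomery; see the survey \cite{lomp}).

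For the forward direction ($R\# B$ semiprime $\Rightarrow$ $S$ semiprime) I would give a hands-on argument using the idempotent $e_0 = e_{\chi_0} = \frac{1}{|G|}\sum_{g\in G} g \in \kk G \subset R\# B$ from Lemma \ref{lem.ideps}. The key observation is that $e_0$ is a central idempotent of $\kk G$ whose action by left and right multiplication on $R\# B$ has corner ring $e_0 (R\# B) e_0$, and this corner is closely tied to $S$: for $r\in R$ one has $e_0 r = r e_0$ (as $g(R)$ need not be fixed, one must instead use that $e_0$ commutes with $\kk G$ and track the $G$-grading), and more usefully $R\# B = \bigoplus_{g\in G} S g$ as a right $\kk G$-comodule/graded object, with $S = \{u \in R\# B : u e_0 = e_0 u = e_0 u e_0\}$ being the identity component in an appropriate sense. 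The cleanest route: observe $S \iso (R\# B)^{\operatorname{co}\kk G}$, the coinvariants of the natural $\kk G$-grading on $R\# B$ (grade $r\# g$ by $g$), so $S$ is the degree-$1$ component of a $G$-graded ring $R\# B$. Now if $N$ were a nonzero nilpotent ideal of $S$, I would show the graded ideal it generates in $R\# B$, namely $\sum_{g\in G} (N\# 1)(1\# g) = \sum_{g} g\inv\big((1\# g)(N\#1)(1\#g\inv)\big)\# g$, is nilpotent: conjugation by grouplikes permutes the $S$-components and, because $S$ is a homogeneous component, products of $|G|\cdot(\text{nilpotency index})$ many elements collapse, in the spirit of Lemma \ref{lem.nideal}.

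The step I expect to be the main obstacle is making precise the passage from "$N$ nilpotent in $S$'' to "a nilpotent ideal of $R\# B$.'' One cannot simply take $N(R\# B)$, since that need not be nilpotent; the trick is to use the $G$-grading. Writing $T = R\# B$ with its $G$-grading $T = \bigoplus_{g\in G} T_g$, $T_1 = S$, conjugation by $1\# h$ maps $T_g$ to $T_{hgh\inv} = T_g$ (as $G$ is abelian) and restricts to an automorphism $c_h$ of $S$. If $N\subseteq S$ is a nilpotent ideal, then $N' := \sum_{h\in G} c_h(N)$ is a $G$-stable nilpotent ideal of $S$ by Lemma \ref{lem.nideal}, and then $N' T = \bigoplus_g N' T_g$ with $(N'T)^k \subseteq \bigoplus_g \big(\sum_h c_h(N')\big)^{\lfloor k/|G|\rfloor} T_g$-type bound shows $N'T$ is a nonzero nilpotent ideal of $T$, contradicting semiprimeness of $T = R\# B$. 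Verifying that $N'T$ is genuinely a two-sided ideal of $T$ (it is a left ideal by construction; for the right ideal property one uses $T_g N' \subseteq c_g(N') T_g \subseteq N' T_g$ since $N'$ is $G$-stable) and that it is nonzero (clear, as $N' \supseteq N \neq 0$ sits in $T_1$) are the routine points; the conceptual core is the grading bookkeeping together with Lemma \ref{lem.nideal}.
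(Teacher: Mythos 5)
Your proposal is correct and takes essentially the same route as the paper: the backward direction is the Fisher--Montgomery result (since $\chr\kk=0$, $S\#\kk G$ is semiprime when $S$ is), and your forward direction---averaging a nilpotent ideal $N$ of $S$ over conjugation by grouplikes (i.e.\ the $G$-action) via Lemma \ref{lem.nideal} to get a $G$-stable nilpotent ideal $N'$, then using $G$-stability to see $N'(R\#B)$ is a two-sided nilpotent ideal of $R\#B$---is exactly the paper's computation $(S\#\kk G)I'=I'(S\#\kk G)$ expressed in $G$-graded language. The digression through $e_0$ and coinvariants is unnecessary (and the suggested identity $e_0r=re_0$ is not true in general, as you yourself note), but the argument you actually carry out does not depend on it.
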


\begin{proof}
Suppose $R\#B$ is semiprime and $I$ is a nilpotent ideal of $S$.
Since $G$ acts on $S$ by automorphism, then $g(I)$ is an ideal of $S$ for every $g \in G$.
Set $I' = \sum_{g \in G} g(I)$, whence $I'$ is a nilpotent $G$-stable ideal of $S$ by Lemma \ref{lem.nideal}.
Thus, \[(S\#\kk G)I' = S\#(I' \kk G) = (SI') \#\kk G = (I'S) \# \kk G = I'(S\#\kk G),\]
so $I'(S\#\kk G)$ is a nilpotent ideal of $S\#\kk G=R\#B$.
Since $R\#B$ is semiprime, then $I'=0$.
It follows that $I=0$ and so $S$ is also semiprime.
Conversely, assume $S$ is semiprime. 
Since $\chr\kk=0$, then $R\#B = S\# \kk G$ is semiprime by \cite[Theorem 7]{FM}.
\end{proof}

Bergen proved the following lemma in the Taft algebra case.
As the relation between the orders of $g$ and $x$ do not factor into the computations,
one may directly port results over to the more general context.

\begin{lemma}
\label{lem.sp2}
Let $T=T_n(\lambda,m,0)$ and let $R$ be a $T$-module algebra.
Then $R\# T$ is semiprime if and only if $R^{\grp{x}}$ is semiprime
and $x(L) \neq 0$ for every nonzero left ideal $L$ of $R$.
\end{lemma}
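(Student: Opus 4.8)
The plan is to reduce the statement to something already available by observing that the generalized Taft algebra $T=T_n(\lambda,m,0)$ differs from a genuine rank-one QLS only in the group of grouplikes: the subalgebra generated by $x$ together with the $\sigma$-derivation structure is exactly a rank-one quantum linear space, while the grouplike $g$ with $g^n=1$ plays the role of the group $G=\langle g\rangle\cong\ZZ/n\ZZ$. Thus $R\#T = S\#\kk G$ where $S=R\#\cR(G,\{g\},\{x\})$, and by (the rank-one case of) Lemma \ref{lem.sp1} we have that $R\#T$ is semiprime if and only if $S$ is semiprime. Note that Lemma \ref{lem.sp1} as stated is proved for an arbitrary QLS $B$, and its proof uses only that $G$ acts by automorphisms, that $\chr\kk=0$, and the result \cite[Theorem 7]{FM}; none of this is sensitive to the relation $x^m=0$ versus $x^m=\alpha(g^m-1)$ with $\alpha=0$, so it applies verbatim here. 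Hence it suffices to characterize semiprimeness of the Ore-type extension $S=R[x;\sigma,\delta]/(x^m)$, or rather of the skew polynomial-like ring $S$ generated by $R$ and $x$ subject to $xr=\sigma(r)x+\delta(r)$ and $x^m=0$.

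The core of the argument is then the following: an element $s\in S$ can be written uniquely as $s=\sum_{k=0}^{m-1} r_k x^k$ with $r_k\in R$, and one analyzes ideals of $S$ by looking at \emph{leading} and \emph{trailing} coefficients. First I would show that if $R^{\grp{x}}=R^{\langle x\rangle}=\ker\delta$ is semiprime and $x(L)\neq0$ for every nonzero left ideal $L$ of $R$, then $S$ has no nonzero nilpotent ideals. Given a nilpotent ideal $J\neq0$ of $S$, pick $0\neq s\in J$ of minimal $x$-degree, say $s = r x^k + (\text{lower order})$ with $r\neq0$. Using that $Rs R\subseteq J$ and multiplying on the right by a suitable element of $R$, together with the hypothesis that $x$ is injective on left ideals of $R$, one produces an element of $J$ whose top coefficient lands in $R^{\langle x\rangle}$; pushing this through, one builds a nonzero nilpotent ideal of $R^{\langle x\rangle}$, contradicting semiprimeness there. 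The hypothesis $x(L)\neq0$ is precisely what prevents $J$ from being "supported only in high powers of $x$" in a way that would make $Jx=0$, i.e.\ it rules out nilpotents arising from the truncation $x^m=0$. Conversely, I would prove the contrapositive in two halves: if $R^{\langle x\rangle}$ is not semiprime, lift a nilpotent ideal $N$ of $R^{\langle x\rangle}$ to a nilpotent ideal of $S$ (the two-sided ideal it generates is controlled because elements of $R^{\langle x\rangle}$ interact with $x$ only through $\sigma$, and one can symmetrize over the finite order of $\sigma$ as in Lemma \ref{lem.nideal}); and if some nonzero left ideal $L$ of $R$ has $x(L)=0$, then $L\subseteq R^{\langle x\rangle}$ and one checks directly that $SxL S$, or more simply the ideal generated by $xL$ (which satisfies $(xL)^2\subseteq x L x L\subseteq x\,\sigma(L)\,x\cdot(\cdots)$ and collapses because $\delta(L)\subseteq\delta L$ vanishes and $L$ is killed by $x$), is a nonzero nilpotent ideal of $S$.

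The main obstacle I anticipate is the bookkeeping in the forward direction: controlling how a two-sided ideal of $S$ interacts with the filtration by powers of $x$ when $\sigma$ and $\delta$ are both present and $x^m=0$ forces wrap-around, so that "leading coefficient" arguments that are routine for honest Ore extensions need care. In particular, extracting a nilpotent ideal of $R^{\langle x\rangle}$ from a nilpotent ideal of $S$ requires simultaneously using the injectivity hypothesis on $x$ to strip off low-order terms and the finite order of $\sigma$ (via a symmetrization in the spirit of Lemma \ref{lem.nideal}) to keep the resulting subset of $R^{\langle x\rangle}$ a genuine ideal. Since, as the paper remarks, none of these computations actually use the relation between $\ord(g)=n$ and $m=\ord(\lambda)$, the cleanest route is simply to cite Bergen's proof of this lemma in the Taft case \cite{B} and note that it goes through unchanged; but the self-contained version above is the argument I would write if reproving it.
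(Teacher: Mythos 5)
Your proposal takes exactly the paper's route: reduce $R\#T$ to $S=R\#\cR(G,\{g\},\{x\})$ via Lemma \ref{lem.sp1} (noting $T_n(\lambda,m,0)$ is itself a rank-one QLS, so that lemma applies directly), and then invoke Bergen's proof of \cite[Theorem 4]{B}, which carries over because the relation between the orders of $g$ and $\lambda$ never enters the computation. Your middle-paragraph self-contained sketch is only a rough outline (the leading-coefficient and converse arguments are not carried out), but since you ultimately rest the proof on the citation, as the paper does, the proposal is correct and essentially identical in approach.
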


\begin{proof}
By Lemma \ref{lem.sp1}, 
it suffices to prove that $R\# T$ is semiprime if and only if $S=R\# \cR(G,\{g\},\{x\})$ is semiprime.
Applying the proof of \cite[Theorem 4]{B}, $S$ is semiprime if and only if $R^{\grp{x}}$ is semiprime
and $x(L) \neq 0$ for every nonzero left ideal $L$ of $R$.
\end{proof}

We are now ready for our main result of the section.
The idea is to inductively build $S$ and use repeated applications of Lemma \ref{lem.sp2}.

\begin{theorem}
\label{thm.sp1}
The smash product $R\# B$ is semiprime if and only if, after possibly reordering,
\begin{enumerate}
\item $\bigcap_{i=1}^\theta R^{\grp{x_i}}$ is semiprime,
\item $x_\theta(L) \neq 0$ for all nonzero left ideals $L$ of $R$, and
\item for all $k \in \{1,\hdots,\theta-1\}$,
$x_k(L) \neq 0$ for all nonzero left ideals $L$ of
$\bigcap_{j=k+1}^\theta R^{\grp{x_j}}$.
\end{enumerate}
\end{theorem}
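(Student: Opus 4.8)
The plan is to prove this by induction on the rank $\theta$, using the decomposition $S = S_{\theta-1}\#\cR(G,\{g_\theta\},\{x_\theta\})$ established in the introduction, together with Lemma~\ref{lem.sp1} and the rank-one result Lemma~\ref{lem.sp2}. First I would note that by Lemma~\ref{lem.sp1}, $R\#B$ is semiprime if and only if $S$ is semiprime, so it suffices to analyze $S$. The base case $\theta = 1$ is exactly Lemma~\ref{lem.sp2} (with $R^{\grp{x}} = R^x = \bigcap_{i=1}^1 R^{\grp{x_i}}$ and no conditions of type (3)). For the inductive step, I would view $S = T\#\cR(G,\{g_\theta\},\{x_\theta\})$ where $T = S_{\theta-1}$ is the smash product of $R$ with the rank $\theta-1$ QLS generated by $x_1,\dots,x_{\theta-1}$, the action of $x_\theta$ being extended to $T$ by $x_\theta(x_i)=0$ for $i<\theta$. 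Applying Lemma~\ref{lem.sp2} in this setting, $S$ is semiprime if and only if $T^{\grp{x_\theta}}$ is semiprime and $x_\theta(L)\neq 0$ for every nonzero left ideal $L$ of $T$.

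The next step is to translate the two conditions $T^{\grp{x_\theta}}$ semiprime and ``$x_\theta(L)\neq 0$ for all nonzero left ideals $L$ of $T$'' into the stated conditions on $R$. For the second: I would argue that $x_\theta(L)\neq 0$ for all nonzero left ideals $L$ of $T = S_{\theta-1}$ is equivalent to $x_\theta(L)\neq 0$ for all nonzero left ideals $L$ of $R$. One direction is immediate since $R \subset T$; for the converse, given a nonzero left ideal $L$ of $T$, one takes a nonzero element of $L$ of minimal $x$-degree (in the $x_1,\dots,x_{\theta-1}$ variables), looks at its leading coefficient $r \in R$, notes that $Rr$ generates a nonzero left ideal of $R$ on which $x_\theta$ acts nontrivially (using that $x_\theta$ commutes past the $x_i$'s up to scalar since $x_\theta(x_i)=0$, so $x_\theta$ acting on an element of $T$ acts ``coefficient-wise'' up to the commutation scalars), and lifts this back. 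This is the computational heart and follows the pattern of Bergen's argument in \cite[Theorem 4]{B}. For the first condition, I would identify $T^{\grp{x_\theta}}$ with $R'\#\cR(G,\{g_i\}_{i<\theta},\{x_i\}_{i<\theta})$ where $R' = R^{\grp{x_\theta}}$: since $x_\theta$ commutes with each $x_i$ ($i<\theta$) up to a nonzero scalar and kills them, an element $\sum_\alpha r_\alpha x^\alpha \in T$ (sum over $\alpha \in \NN^{\theta-1}$) lies in $T^{\grp{x_\theta}}$ iff each $r_\alpha \in R^{\grp{x_\theta}}$, giving $T^{\grp{x_\theta}} = R^{\grp{x_\theta}}\#\cR(G,\{g_i\}_{i<\theta},\{x_i\}_{i<\theta})$, which is a smash product of $R^{\grp{x_\theta}}$ with the rank $\theta - 1$ QLS. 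One must check $R^{\grp{x_\theta}}$ is stable under $g_1,\dots,g_{\theta-1}$ and $x_1,\dots,x_{\theta-1}$, which follows from the commutation relations $g x_\theta = \chi_\theta(g)x_\theta g$ and $x_i x_\theta = \chi_\theta(g_i) x_\theta x_i$ in $B$ translated to the action on $R$.

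Then I would apply the inductive hypothesis to the rank $\theta - 1$ QLS $B' = B(G,\{g_i\}_{i<\theta},\{x_i\}_{i<\theta})$ acting on the $B'$-module algebra $R^{\grp{x_\theta}}$. This says $R^{\grp{x_\theta}}\#B'$ — equivalently, by Lemma~\ref{lem.sp1} applied in rank $\theta-1$, the algebra $T^{\grp{x_\theta}}$ — is semiprime if and only if, after reordering $x_1,\dots,x_{\theta-1}$: (1') $\bigcap_{i=1}^{\theta-1}(R^{\grp{x_\theta}})^{\grp{x_i}}$ is semiprime, (2') $x_{\theta-1}(L)\neq 0$ for all nonzero left ideals $L$ of $R^{\grp{x_\theta}}$, and (3') for $k\in\{1,\dots,\theta-2\}$, $x_k(L)\neq 0$ for all nonzero left ideals $L$ of $\bigcap_{j=k+1}^{\theta-1}(R^{\grp{x_\theta}})^{\grp{x_j}}$. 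Now I would observe $\bigcap_{i=1}^{\theta-1}(R^{\grp{x_\theta}})^{\grp{x_i}} = \bigcap_{i=1}^{\theta}R^{\grp{x_i}}$, which matches (1); condition (2') is precisely the statement that $x_{\theta-1}(L)\neq 0$ for all nonzero left ideals of $R^{\grp{x_\theta}} = \bigcap_{j=\theta}^{\theta}R^{\grp{x_j}}$, which is the $k=\theta-1$ instance of (3); and (3') with $(R^{\grp{x_\theta}})^{\grp{x_j}}$ replaced using $(R^{\grp{x_\theta}})^{\grp{x_j}} = R^{\grp{x_j}}\cap R^{\grp{x_\theta}}$, so $\bigcap_{j=k+1}^{\theta-1}(R^{\grp{x_\theta}})^{\grp{x_j}} = \bigcap_{j=k+1}^{\theta}R^{\grp{x_j}}$, gives exactly the $k\in\{1,\dots,\theta-2\}$ instances of (3). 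Combining with the condition ``$x_\theta(L)\neq 0$ for all nonzero left ideals $L$ of $R$'' coming from the outer application of Lemma~\ref{lem.sp2} — which is (2) — all three conditions (1), (2), (3) are accounted for, completing the induction.

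The main obstacle I anticipate is the bookkeeping in the inductive step: one must be careful that the ``reordering'' permitted at each level can be assembled into a single global reordering of $x_1,\dots,x_\theta$ (placing $x_\theta$ last at the outer step, then recursively ordering the rest), and one must verify cleanly the two structural identities $T^{\grp{x_\theta}} \iso R^{\grp{x_\theta}}\#\cR(G,\{g_i\}_{i<\theta},\{x_i\}_{i<\theta})$ and the equivalence between ``$x_\theta$ nonvanishing on left ideals of $T$'' and ``$x_\theta$ nonvanishing on left ideals of $R$'', both of which rely on the leading-coefficient argument and the fact that $x_\theta$ acts on $T$ compatibly with the $\NN^{\theta-1}$-grading by the $x$-monomials up to nonzero scalars. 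Neither is deep, but both require the explicit commutation relations and the PBW-type basis of $S$ over $R$ noted in the introduction.
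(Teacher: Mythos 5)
Your route is essentially the paper's: reduce to $S$ via Lemma \ref{lem.sp1}, peel off $x_\theta$ using $S \iso S_{\theta-1}\#\cR(G,\{g_\theta\},\{x_\theta\})$ and Lemma \ref{lem.sp2}, identify $S_{\theta-1}^{\grp{x_\theta}} = R^{\grp{x_\theta}}\#\cR(G,\{g_1,\hdots,g_{\theta-1}\},\{x_1,\hdots,x_{\theta-1}\})$ from the coefficientwise action of $x_\theta$, transfer the left-ideal condition between $S_{\theta-1}$ and $R$, and iterate; your formal induction (re-invoking Lemma \ref{lem.sp1} for the rank $\theta-1$ QLS acting on $R^{\grp{x_\theta}}$) is just a cleaner packaging of the paper's ``continuing in this way,'' and your identifications $(R^{\grp{x_\theta}})^{\grp{x_i}} = R^{\grp{x_\theta}}\cap R^{\grp{x_i}}$ and the reordering bookkeeping match the paper.

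There is, however, one step where your justification is wrong as stated, and it is the step carrying real content. You claim that ``$x_\theta(L)\neq 0$ for all nonzero left ideals $L$ of $T=S_{\theta-1}$ implies $x_\theta(L)\neq 0$ for all nonzero left ideals $L$ of $R$'' is \emph{immediate since $R\subset T$}. It is not: a left ideal of $R$ is not a left ideal of $T$, and the left ideal $TL$ it generates need not be annihilated by $x_\theta$ even when $L$ is, because for $t \in T$ and $\ell \in L$ one has $x_\theta(t\ell) = g_\theta(t)x_\theta(\ell) + x_\theta(t)\ell = x_\theta(t)\ell$, which can be nonzero. So nothing follows by mere restriction along $R \subset T$, and this is exactly the direction needed to deduce condition (2) (and, at the inner stages of your induction, the instances of (3)) from semiprimeness of $R\#B$; your leading-coefficient argument only handles the opposite direction, from the condition on $R$ to the condition on $S_{\theta-1}$. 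The paper closes this direction by a different device: a nonzero (suitably stabilized) ideal $L$ of $R$ with $x_\theta(L)=0$ is extended to an ideal $L''$ of the smash product whose coefficients lie in $L$, so that $x_\theta(L'')=0$ as well, using that $G$ and the $x_i$ with $i<\theta$ skew-commute with $x_\theta$ and hence preserve $\ker x_\theta$; some construction of this kind, in the spirit of \cite[Theorem 4]{B}, must be supplied to make your ``immediate'' direction correct.
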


\begin{proof}
By Lemma \ref{lem.sp1}, $R\#B$ is semiprime if and only if $S$ is semiprime.
Since $S \iso S_{\theta-1} \# \cR(G,\{g_\theta\},\{x_\theta\})$, then by Lemma \ref{lem.sp2},
$S$ is semiprime if and only if $S_{\theta-1}^{\grp{x_\theta}}$ is semiprime and $x_\theta(L)\neq 0$
for all nonzero left ideals $L$ of $S_{\theta-1}$.

Let $L$ be a nonzero left ideal of $S_{\theta-1}$.
We denote by $L'$ the set of leading coefficients of elements of $L$ under the standard lexicographic ordering ($x_1 > x_2 > \cdots > x_{\theta-1}$). 
By standard arguments related to the (skew) Hilbert Basis Theorem (see, e.g., \cite{GW}), $L'$ is a left ideal of $R$ and, by hypothesis, $L'\neq 0$.
If $x_\theta(L)=0$, then $x_k(L')=0$.
On the other hand, if $L$ is a nonzero ideal of $R$, then $L$ extends to an ideal $L''$ of $S$
whose coefficients are elements of $L$.
Obviously, if $x_\theta(L)=0$, then $x_\theta(L'')=0$ and so 
we conclude that $S$ is semiprime if and only if
$S_{\theta-1}^{\grp{x_\theta}}$ is semiprime and $x_\theta(L)\neq 0$
for all nonzero left ideals $L$ of $R$.

Let $p \in S_{\theta-1}$ and write $p = \sum_i r_i x^{\alpha_i}$, with $r_i \in R$ and $\alpha_i \in \NN^{\theta-1}$.
Then $x_\theta(p) = \sum x_\theta(r_i) x^{\alpha_i}$ and so 
$x_\theta(p)=0$ if and only if $r_i \in R^{\grp{x_\theta}}$ for all $i$. That is, 
\begin{align*}
S_{\theta-1}^{\grp{x_\theta}} 
	&= R^{\grp{x_\theta}}\#\cR(G,\{g_1,\hdots,g_{\theta-1}\},\{x_1,\hdots,x_{\theta-1}\}) \\
	&= \left( R^{\grp{x_\theta}}\#\cR(G,\{g_1,\hdots,g_{\theta-2}\},\{x_1,\hdots,x_{\theta-2}\}) \right) \# \cR(G,\{g_{\theta-1}\},\{x_{\theta-1}\}).
\end{align*}
Set $R'=R^{\grp{x_\theta}}\#\cR(G,\{g_1,\hdots,g_{\theta-2}\},\{x_1,\hdots,x_{\theta-2}\})$.
Applying Lemma \ref{lem.sp2} again, we have that $S_{\theta-1}^{\grp{x_\theta}}$ is semiprime if and only if
$\left( R' \right)^{\grp{x_{\theta-1}}}$ 
is semiprime and $x_{\theta-1}(L) \neq 0$ for all nonzero left ideals $L$ of $R'$.
Using the trick above we can restate the last condition as $x_{\theta-1}(L) \neq 0$ for all nonzero left ideals $L$ of $R^{\grp{x_\theta}}$.

Furthermore, using our above calculations we see that
\begin{align*}
(R')^{\grp{x_{\theta-1}}} 
	&=  \left( R^{\grp{x_\theta}} \right)^{\grp{x_{\theta-1}}}\#\cR(G,\{g_1,\hdots,g_{\theta-2}\},\{x_1,\hdots,x_{\theta-2}\}) \\
	&= \left( R^{\grp{x_\theta}} \cap R^{\grp{x_{\theta-1}}} \right)\#\cR(G,\{g_1,\hdots,g_{\theta-2}\},\{x_1,\hdots,x_{\theta-2}\}).
\end{align*}
Continuing in this way, we establish the theorem.
\end{proof}



We give an example below in which condition (3) fails and the smash product is not semiprime.

\begin{example}
\label{ex.nsp}
Let $B=B(G,\{g_1,g_2\},\{\chi_1,\chi_2\})$ where $G = \grp{g_1} \times \grp{g_2} \iso \ZZ_n \times \ZZ_n$ 
and $A=\kk_p[u_1,u_2]$ with $p$ a primitive $k$th root of unity where $k \mid n$ and $k>1$ (see Definition \ref{defn.qas}).
Let $\omega$ be a primitive $n$th root of unity and recall that $\lambda_i = \chi_i(g_i)$ for $i=1,2$.
Set $\chi_1(g_1)=\chi_2(g_1)= \omega$ and $\chi_2(g_2)=\chi_1(g_2) = \omega\inv$.
By \cite[Proposition 2.1]{GWY}, 
\[ g_i(u_1)=p u_1, \quad g_i(u_2)=\lambda_i\inv p u_2, \quad x_i(u_1)=0, \quad x_i(u_2)=u_1,\]
for $i=1,2$. A straightforward check shows that this defines an action of $B$ on $A$.
Furthermore, by \cite[Lemma 2.1]{GWY}, $R^{\grp{x_1}}=R^{\grp{x_2}}=\kk[u_1,u_2^n]$, 
and so $x_1(L)=0$ for any left ideal $L$ of $\kk[u_1,u_2^n]$.
Thus, property (3) in Theorem \ref{thm.sp1} fails in this case and so $A\# B$ is not semiprime.
\end{example}

The criteria of Theorem \ref{thm.sp1} simplify considerably in the case that $R$ is a domain.
We then apply this to a case where the smash product is semiprime.

\begin{corollary}
\label{cor.sp1}
\begin{enumerate}
\item If $R$ is semiprime, then $R\# B$ is semiprime if and only if, after possibly reordering,
\begin{enumerate}
\item $\bigcap_{i=1}^\theta R^{\grp{x_i}}$ is semiprime,
\item $x_\theta(I) \neq 0$ for all nonzero ideals $I$ of $R$, and 
\item for all $k \in \{1,\hdots,\theta-1\}$,
$x_k(I) \neq 0$ for all nonzero ideals $I$ of $\bigcap_{j=k+1}^\theta R^{\grp{x_j}}$.
\end{enumerate}
\item If $R$ is prime, then $R\# B$ is semiprime if and only if, after possibly reordering,
$\bigcap_{i=1}^\theta R^{\grp{x_i}}$ is semiprime,
$x_\theta(R) \neq 0$, 
and for all $k \in \{1,\hdots,\theta-1\}$, $x_k\left( \bigcap_{j=k+1}^\theta R^{\grp{x_j}} \right) \neq 0$.
\item If $R$ is a domain, then $R\# B$ is semiprime if and only if, after possibly reordering,
$x_\theta(R) \neq 0$ and for all $k \in \{1,\hdots,\theta-1\}$,
$x_k\left( \bigcap_{j=k+1}^\theta R^{\grp{x_j}} \right) \neq 0$.
\end{enumerate}
\end{corollary}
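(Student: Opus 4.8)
The plan is to deduce this corollary directly from Theorem~\ref{thm.sp1} by specializing the three conditions to the setting where $R$ is a domain, which by the earlier parts of the corollary is really a chain of simplifications. First I would observe that a domain is in particular prime, so part~(2) of the corollary applies; thus it suffices to show that, under the hypothesis that $R$ is a domain, condition~(1) of Theorem~\ref{thm.sp1} (semiprimeness of $R^x = \bigcap_{i=1}^\theta R^{\grp{x_i}}$) is automatic and hence can be dropped. The key point is that each $x_i$ induces a $\sigma_i$-derivation of $R$, and the set of elements killed by a skew-derivation of a domain is a subalgebra that is again a domain: if $a,b \in R^{\grp{x_i}}$ are nonzero then $ab \neq 0$ since $R$ is a domain, so $R^{\grp{x_i}}$ is a domain, and a finite intersection of subdomains of a common domain is a domain. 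In particular $R^x$ is a domain, hence reduced, hence semiprime, so condition~(1) is vacuous.

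Next I would record why parts~(1) and~(2) of the corollary themselves follow from Theorem~\ref{thm.sp1}; although the corollary statement only asks for part~(3), the cleanest exposition proves (1) then (2) then (3). For part~(1): when $R$ is semiprime, a standard fact (used implicitly already in the proof of Theorem~\ref{thm.sp1} via the leading-coefficient/Hilbert-basis argument) is that $x_k(L) \neq 0$ for all nonzero left ideals $L$ iff $x_k(I) \neq 0$ for all nonzero two-sided ideals $I$ --- one direction is trivial, and for the other, if $L$ is a nonzero left ideal with $x_k(L) = 0$ then, $R$ being semiprime, one produces a nonzero two-sided ideal on which $x_k$ still vanishes (e.g. using that the two-sided ideal generated by $L$, intersected appropriately, or passing to annihilators, remains nontrivial). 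For part~(2): when $R$ is prime, a nonzero two-sided ideal $I$ is essential as a left ideal and has zero left annihilator, so the conditions ``$x_k(I) \neq 0$ for all nonzero ideals $I$'' collapse to the single statement ``$x_k$ is not identically zero on the relevant subalgebra,'' i.e. $x_\theta(R) \neq 0$ and $x_k\bigl(\bigcap_{j=k+1}^\theta R^{\grp{x_j}}\bigr) \neq 0$ --- here one also needs that $\bigcap_{j=k+1}^\theta R^{\grp{x_j}}$ is prime, which again holds because it is a domain when $R$ is, or more generally because each $R^{\grp{x_j}}$ inherits primeness in the cases at hand.

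Then part~(3) is immediate: a domain is prime, so by part~(2) the criterion is that $R^x$ is semiprime (automatic, by the subdomain observation above), $x_\theta(R) \neq 0$, and $x_k\bigl(\bigcap_{j=k+1}^\theta R^{\grp{x_j}}\bigr) \neq 0$ for $k \in \{1,\dots,\theta-1\}$; dropping the automatic first condition gives exactly the stated equivalence.

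The main obstacle, and the only place requiring genuine care rather than bookkeeping, is the reduction of the ``nonzero left ideal'' conditions to ``nonzero two-sided ideal'' conditions and finally to ``not identically zero'' in the prime case --- i.e. justifying the passage from condition~(3) of Theorem~\ref{thm.sp1} to condition~(3) of part~(2) of the corollary. One has to check that when $R$ is prime the relevant intersection $\bigcap_{j=k+1}^\theta R^{\grp{x_j}}$ is still prime (so that ``$x_k(I) \neq 0$ for all nonzero ideals $I$'' genuinely reduces to ``$x_k \not\equiv 0$'' there), and that the skew-derivation $x_k$ restricted to this subalgebra is compatible with the ideal structure. Since for a domain all these subalgebras are again domains, primeness is free; the subtler intermediate case is part~(1) with $R$ merely semiprime, where one must argue that a left ideal annihilated by $x_k$ forces a nonzero two-sided ideal annihilated by $x_k$, using that in a semiprime ring no nonzero left ideal has nonzero left annihilator after symmetrization. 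I would handle this by the same leading-coefficient device already invoked in the proof of Theorem~\ref{thm.sp1}: a nonzero ideal of $R$ extends to an ideal of the relevant subalgebra of $S$ whose coefficients lie in that ideal, and $x_k$ acts coefficient-wise, so vanishing propagates in both directions.
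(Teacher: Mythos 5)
Your high-level plan is the same as the paper's: deduce parts (2) and (3) from part (1) (with the observation that for a domain all the fixed subrings are domains, hence semiprime, so that condition becomes vacuous), and obtain part (1) from Theorem \ref{thm.sp1} by converting the ``nonzero left ideal'' conditions into ``nonzero two-sided ideal'' conditions when $R$ is semiprime. But that conversion is the entire content of the corollary, and your proposal never actually supplies it: you correctly flag it as ``the main obstacle'' and then offer substitutes that do not do the job. The leading-coefficient device from the proof of Theorem \ref{thm.sp1} relates (left) ideals of $S_{\theta-1}$ to left ideals of $R$; it says nothing about turning a left ideal of $R$ annihilated by $x_\theta$ into a two-sided ideal of $R$ annihilated by $x_\theta$. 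Likewise, ``in a semiprime ring no nonzero left ideal has nonzero left annihilator after symmetrization'' is not a usable fact as stated (in $R=\kk\times\kk$ the left ideal $0\times\kk$ has left annihilator $\kk\times 0$), and ``the two-sided ideal generated by $L$, intersected appropriately'' is not an argument. The paper (following Bergen's Corollary 5) closes this gap with a short twisted-Leibniz computation: if $x_\theta(L)=0$ for a nonzero left ideal $L$, then $RL\subset L$ gives $0=x_\theta(RL)=g_\theta(R)x_\theta(L)+x_\theta(R)L=x_\theta(R)L$, hence $\bigl(Lx_\theta(R)\bigr)^2=0$, hence $Lx_\theta(R)=0$ by semiprimeness; since $g_\theta$ skew-commutes with $x_\theta$, $g_\theta(L)x_\theta(R)$ is a scalar multiple of $g_\theta\bigl(Lx_\theta(R)\bigr)=0$, so $x_\theta(LR)=g_\theta(L)x_\theta(R)+x_\theta(L)R=0$; but $0\neq L^2\subset LR$ and $LR$ is a two-sided ideal, contradicting condition (b). This computation (and its analogue inside the subrings $\bigcap_{j>k}R^{\grp{x_j}}$) is the missing step in your write-up.

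A secondary weak point is your treatment of part (2): you collapse ``$x_k(I)\neq 0$ for all nonzero ideals $I$ of $\bigcap_{j>k}R^{\grp{x_j}}$'' to ``$x_k$ not identically zero there'' by asserting that these fixed subrings ``inherit primeness in the cases at hand.'' That is clear for domains (which is all part (3) needs) but is not justified for a general prime $R$; if you argue this way you must either prove the needed primeness/annihilator property of $\bigcap_{j>k}R^{\grp{x_j}}$ or, as the paper indicates, run the Leibniz-rule annihilator argument directly inside those subrings rather than invoking an unproved inheritance of primeness.
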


\begin{proof}
We will prove (1) following \cite[Corollary 5]{B}. Parts (2) and (3) follow similarly.

If we assume $R\# B$ is semiprime, then conditions (a)-(c) must hold by Theorem \ref{thm.sp1}.
Conversely, assume conditions (a)-(c) hold and let $L$ be a nonzero left ideal of $R$ such that $x_\theta(L)=0$.
Then $RL \subset L$ so 
\begin{align}
\label{eq.ann}
0 = x_\theta(RL) = g_\theta(R)x_\theta(L) + x_\theta(R)L = x_\theta(R)L,
\end{align}
whence $( Lx_\theta(R) )^2 = L(x_\theta(R)L)x_\theta(R) = 0$.
As $Lx_\theta(R)$ is a nilpotent left ideal of the semiprime ring $R$, then $Lx_\theta(R)=0$ and so
\[ x_\theta(LR) = g_\theta(L)x_\theta(R) + x_\theta(L)R = g_\theta(L)x_\theta(g_\theta(R)) = g_\theta(Lx_\theta(R)) = 0.\]
But $0 \neq L^2 \subset LR$ and $LR$ is a two-sided ideal of the semiprime ring $R$,
contradicting our hypothesis.
Thus, $x_\theta(L)\neq 0$. A similar proof works in the subrings as well.
\end{proof}



\begin{example}
\label{ex.sp}
Let $A=\kk_{\bp}[u_1,u_2,u_3]$ be a quantum affine space with
\[
\bp = \left(\begin{smallmatrix}
1 	& p 	& p\inv \\
p\inv & 1	& p\inv \\
p	& p	& 1
\end{smallmatrix}\right)
\]
for some $p$ a primitive $n$th root of unity, $n>1$.
Let $G=\grp{g_1} \times \grp{g_2} \iso \ZZ_n \times \ZZ_n$ and consider
the rank 2 QLS $B=B(G,\gu,\cu)$ with data
\[ \chi_1(g_1) = p\inv, \quad \chi_1(g_2)=p, \quad \chi_2(g_1)=p\inv, \quad \chi_2(g_2)=p\inv.\]
We define an action of $B$ on $A$ by setting
$g_1=\diag(p,p^2, 1)$, $g_2=\diag(p, 1, p^2)$,
$x_1(u_2)=u_1$, $x_2(u_3)=u_1$, and $x_i(u_j)=0$ for all other $i,j$.
As $A$ is a domain, we may apply Corollary \ref{cor.sp1} (3).
Note that $x_2(u_3)=u_1 \neq 0$, so $x_2(A) \neq 0$.
Moreover, $A^{\grp{x_2}} = \kk[u_1,u_2,u_3^n]$ and $x_1(u_2) = u_1 \neq 0$ so $x_1(A^{\grp{x_2}}) \neq 0$.
Thus, $A\# B$ is semiprime.
%
\end{example}

\section{Primeness of smash products}

In this section, we give criteria for a smash product $R\#B$ to be prime.
The next result is another instance where we may apply
Bergen's result almost directly, this time \cite[Lemma 3]{B}.

\begin{lemma}
\label{lem.nilp}
Let $T=T_n(\lambda,m,0)$ and let $R$ be a $T$-module algebra.
If $R^{\grp{x}}$ is semiprime and $L$ is a left ideal of $R$ such that $x(R)\neq 0$,
then $x^{m-1}(L)$ is a left ideal of $R^{\grp{x}}$ that is not nilpotent.
\end{lemma}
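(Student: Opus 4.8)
The plan is to verify the two assertions of the lemma in turn: that $x^{m-1}(L)$ is a left ideal of $R^{\grp{x}}$, and that it is not nilpotent. I would use freely that $x^m=0$ as an operator on $R$ (the relation $x^m=\alpha(g^m-1)=0$ of $T$, specialized at $\alpha=0$), the operator identity $g\circ x^k=\lambda^k\,x^k\circ g$, the $\lambda$-Leibniz rule
\[
x^n(rs)=\sum_{k=0}^n\binom{n}{k}_\lambda\,x^{n-k}(g^k(r))\,x^k(s),
\]
and the numerical fact that the Gaussian binomial $\binom{n}{k}_\lambda$ is nonzero whenever $0\le k\le n\le m-1$ (because $[j]_\lambda:=1+\lambda+\cdots+\lambda^{j-1}$ is nonzero for $1\le j\le m-1$), while $x^j=0$ for $j\ge m$.

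First, $x^{m-1}(L)$ is a left ideal of $R^{\grp{x}}$. Since $x\circ x^{m-1}=x^m=0$ on $R$, we have $x^{m-1}(L)\subseteq R^{\grp{x}}$, and it is plainly an additive subgroup. Given $a\in R^{\grp{x}}$, the element $b:=g^{1-m}(a)$ again lies in $R^{\grp{x}}$ because the kernel of $x$ is $g$-stable; since then $x^j(g^i(b))=0$ for every $j\ge 1$, the $\lambda$-Leibniz rule collapses to $x^{m-1}(b\ell)=g^{m-1}(b)\,x^{m-1}(\ell)=a\,x^{m-1}(\ell)$ for all $\ell\in L$. As $b\ell\in L$, this shows $R^{\grp{x}}\,x^{m-1}(L)\subseteq x^{m-1}(L)$.

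For non-nilpotence, I would first record that a semiprime ring has no nonzero nilpotent one-sided ideal: if $J$ is a left ideal with $J^k=0$ for minimal $k\ge 2$, then $(J^{k-1})^2\subseteq J^k=0$, so $cRc\subseteq cJ^{k-1}=0$ for every $c\in J^{k-1}$, forcing $J^{k-1}=0$, a contradiction. Hence it suffices to prove $x^{m-1}(L)\neq 0$; equivalently, that $x(L)\neq 0$ forces $x^{m-1}(L)\neq 0$. One may assume $L$ is $g$-stable, replacing it by $\sum_{g\in G}g(L)$ — this changes neither hypothesis nor conclusion, since $g$ commutes with each $x^i$ up to the nonzero scalar $\lambda^{\pm i}$. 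Now suppose $x^{m-1}(L)=0$ and set $t=\max\{i:x^i(L)\neq 0\}$, so $1\le t\le m-2$, and put $N:=x^t(L)$. Then $x(N)=x^{t+1}(L)=0$, so $N\subseteq R^{\grp{x}}$; by the computation of the previous paragraph (with $t$ in place of $m-1$), $N$ is a left ideal of $R^{\grp{x}}$; and $N$ is $g$-stable because $g(x^t(\ell))=\lambda^t x^t(g(\ell))\in x^t(L)$. By the reduction just made, $N\neq 0$ is not nilpotent, so in particular $N^2\neq 0$. If $2t<m$, then in the $\lambda$-Leibniz expansion of $x^{2t}(\ell_1\ell_2)$ only the composition $(t,t)$ survives (as $x^i(L)=0$ for $i>t$), and it carries the nonzero coefficient $\binom{2t}{t}_\lambda$; comparing with $x^{2t}(\ell_1\ell_2)=0$ (valid since $\ell_1\ell_2\in L$ and $2t>t$) gives $g^t(x^t(\ell_1))\,x^t(\ell_2)=0$ for all $\ell_1,\ell_2\in L$, and choosing these with $x^t(\ell_1)=g^{-t}(a)$ and $x^t(\ell_2)=b$ for arbitrary $a,b\in N$ (possible since $N$ is $g$-stable and $N=x^t(L)$) yields $ab=0$, i.e.\ $N^2=0$ — a contradiction.

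The remaining case $2t\ge m$ (which forces $m\ge 4$) is the one I expect to be the main obstacle: here $\ell_1\ell_2$ over-differentiates, the coefficient $\binom{2t}{t}_\lambda$ vanishes, and the elementary comparison above breaks down. Here I would follow the delicate part of \cite[Lemma 3]{B}: apply $x^{t+1}$ (and, where needed, $x$ itself) to products of elements of $L$ — including products $\ell\,c\,\ell'$ with $c\in R^{\grp{x}}$ — to obtain $\lambda$-Leibniz identities among the various $x^i(\ell)\,x^j(\ell')$, cancel the unwanted terms using $x^{t+1}(L)=0$ and $x^m=0$, and track the $\lambda$-binomial coefficients that survive (the step where primitivity of $\lambda$ as an $m$th root of unity is essential); this ultimately forces some power of $N$ to vanish, hence $N=0$, the desired contradiction. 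Since none of this — nor any step above — ever refers to $\ord(g)$, Bergen's argument transfers without change, which is the sense in which the present lemma is the generalized Taft version of \cite[Lemma 3]{B}.
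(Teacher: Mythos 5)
Your first paragraph (that $x^{m-1}(L)$ lands in $R^{\grp{x}}$ and is a left ideal there) and your reduction of non-nilpotence to the single claim $x^{m-1}(L)\neq 0$ are correct and agree with the paper; so does your silent reading of the hypothesis as $x(L)\neq 0$ rather than the literal $x(R)\neq 0$, which is how the lemma is actually invoked later. The gap is in the second half. Writing $t=\max\{i:x^i(L)\neq 0\}$ and assuming $t\le m-2$, you only obtain a contradiction when $2t<m$; in the complementary case $2t\ge m$ (nonempty as soon as $m\ge 4$, e.g.\ $m=4$, $t=2$) you explicitly defer to an unspecified ``delicate part'' of Bergen's Lemma~3 and describe, in general terms, a computation you do not carry out. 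That is not a proof, and the computation you gesture at (applying $x^{t+1}$ to products $\ell c\ell'$ and cancelling terms) is not the one that is needed.

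The missing idea --- which is the entire content of the paper's argument and makes your case split unnecessary --- is to apply only $t+1$ powers of $x$ to a product, by pre-differentiating one of the factors. Let $s$ be minimal with $x^s(L)=0$, so $s=t+1\le m-1$ under your standing assumption, and take $a,b\in L$. Since $L$ is a left ideal, $x^{s-2}(a)\,b\in L$, hence $x^{s}\bigl(x^{s-2}(a)\,b\bigr)=0$. In the $\lambda$-Leibniz expansion of this expression, the $k$th term involves $x^{2s-2-k}(a)$ (zero for $k\le s-2$, since $2s-2-k\ge s$) and $x^{k}(b)$ (zero for $k\ge s$), so only $k=s-1$ survives, and its coefficient is $\binom{s}{s-1}_{\lambda}=1+\lambda+\cdots+\lambda^{s-1}\neq 0$ because $s\le m-1$. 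This gives $g^{s-1}(x^{s-1}(a))\,x^{s-1}(b)=0$ for all $a,b\in L$, hence $(x^{s-1}(L))^{2}=0$ after the $g$-stabilization of $L$ you already performed; as $x^{s-1}(L)$ is a left ideal of the semiprime ring $R^{\grp{x}}$, it must vanish, contradicting the minimality of $s$. The total number of outer applications of $x$ here is $s\le m-1$, so neither $x^{m}=0$ nor a vanishing Gaussian binomial ever interferes: there is no second case to treat, and no appeal to an external ``delicate'' argument is required.
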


Our next goal is to generalize the previous lemma to the QLS setting.
The following two results establish non-nilpotency of certain ideals in $R$.

\begin{lemma}
\label{lem.ideals0}
Suppose, after possibly reordering, that each $R_k$ is semiprime, $k \in \{1,\hdots,\theta\}$,
and $L$ is a left ideal of $R$ such that $x_{k+1}(L_k) \neq 0$ where $L_0 = L$ and
$L_k = x_k^{n_k-1}(L_{k-1})$ for $k \geq 1$.
Then $\bx(L)$ is a left ideal of $R^x$ that is not nilpotent.
\end{lemma}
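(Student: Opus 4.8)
The plan is to prove this by induction on $\theta$, using Lemma \ref{lem.nilp} as the base case and the rank-$1$ step, and carefully tracking which invariant subalgebra each successive ideal lives in. First I would set $T_i = T_{|g_i|}(\lambda_i, m_i, 0)$ and recall that $R$ is a $T_1$-module algebra via $g_1, x_1$, with $m_1 = n_1$ in the notation of the lemma (the $n_k$ here play the role of the $m_i$ from Definition \ref{defn.boson}, so I would silently reconcile this or rename). Since $R_1 = R^{\grp{x_1}}$ is semiprime by hypothesis and $x_1(L_0) = x_1(L) \neq 0$ (the $k=0$ case of the hypothesis $x_{k+1}(L_k) \neq 0$), Lemma \ref{lem.nilp} applies directly: $L_1 = x_1^{n_1-1}(L)$ is a left ideal of $R^{\grp{x_1}} = R_1$ that is not nilpotent. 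This is the anchor of the induction.

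For the inductive step I would argue as follows. Having produced $L_k = x_k^{n_k-1}(L_{k-1})$ as a non-nilpotent left ideal of $R_k = \bigcap_{i=1}^k R^{\grp{x_i}}$, I want to pass to $L_{k+1}$. The key point is that $R_k$ is itself a $T_{k+1}$-module algebra: because $x_{k+1}$ skew-commutes with $x_1, \dots, x_k$ and $g_{k+1}$ normalizes each $R^{\grp{x_i}}$ (indeed $g_{k+1} x_i = \chi_i(g_{k+1}) x_i g_{k+1}$ shows $g_{k+1}(R^{\grp{x_i}}) \subseteq R^{\grp{x_i}}$, and similarly $x_{k+1}(R^{\grp{x_i}}) \subseteq R^{\grp{x_i}}$ since $x_{k+1} x_i = \chi_i(g_{k+1}) x_i x_{k+1}$ forces $x_{k+1}$ and $x_i$ to commute as operators up to scalar), the operators $g_{k+1}, x_{k+1}$ restrict to $R_k$ and still satisfy the $T_{k+1}$ relations. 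Now $R_{k+1} = R_k^{\grp{x_{k+1}}}$ is semiprime by hypothesis, and $x_{k+1}(L_k) \neq 0$ by hypothesis; since $R_k L_k \subseteq L_k$, the computation $x_{k+1}(R_k L_k) = g_{k+1}(R_k) x_{k+1}(L_k) + x_{k+1}(R_k) L_k$ together with a nilpotent-ideal argument in the semiprime ring $R_k$ (exactly as in the proof of Corollary \ref{cor.sp1}) upgrades this to $x_{k+1}(R_k) \neq 0$ as an operator on $R_k$, which is the form of the hypothesis Lemma \ref{lem.nilp} actually needs. Applying Lemma \ref{lem.nilp} to the $T_{k+1}$-module algebra $R_k$ and its left ideal $L_k$ gives that $L_{k+1} = x_{k+1}^{n_{k+1}-1}(L_k)$ is a non-nilpotent left ideal of $R_k^{\grp{x_{k+1}}} = R_{k+1}$.

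Iterating from $k=0$ to $k = \theta - 1$ yields that $L_\theta$ is a non-nilpotent left ideal of $R_\theta = R^x$. Finally I would observe that $L_\theta = x_\theta^{n_\theta - 1}(x_{\theta-1}^{n_{\theta-1}-1}(\cdots x_1^{n_1-1}(L)\cdots))$, and since each $x_i^{m-1}$ commutes with each $x_j^{n-1}$ up to a nonzero scalar (again from $x_i x_j = \chi_j(g_i) x_j x_i$), this composite operator agrees, up to a nonzero scalar, with applying $\bx = \prod_{i=1}^\theta x_i^{m_i-1}$; alternatively one checks directly that $\bx(L) = L_\theta$ as the image, since reordering the factors only rescales each monomial. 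Hence $\bx(L)$ is a non-nilpotent left ideal of $R^x$, as claimed.

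The main obstacle I anticipate is the bookkeeping in the inductive step: making precise that $R_k$ genuinely is a $T_{k+1}$-module algebra (not merely that $x_{k+1}, g_{k+1}$ act on it) and that the semiprimeness hypothesis on $R_{k+1}$ is exactly $R_k^{\grp{x_{k+1}}}$ being semiprime, so that Lemma \ref{lem.nilp} can be invoked verbatim. A secondary subtlety is the passage from ``$x_{k+1}(L_k) \neq 0$'' to ``$x_{k+1}(R_k) \neq 0$,'' which requires the semiprimeness of $R_k$ and the Leibniz-rule manipulation; this is routine but must be stated, since Lemma \ref{lem.nilp} is phrased with the hypothesis $x(R) \neq 0$ rather than $x(L) \neq 0$.
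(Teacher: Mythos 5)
Your proof is correct and follows essentially the same route as the paper: induct on $k$, applying Lemma \ref{lem.nilp} to the $T_{k+1}$-module algebra $R_k$ and the left ideal $L_k$, using that $R_{k+1}=R_k^{\grp{x_{k+1}}}$ is semiprime; your extra care in checking that $R_k$ is stable under $g_{k+1},x_{k+1}$ and that reordering the skew-commuting factors of $\bx$ only rescales, merely makes explicit what the paper leaves implicit. The one detour you can drop is the passage from $x_{k+1}(L_k)\neq 0$ to $x_{k+1}(R_k)\neq 0$: since $L_k\subseteq R_k$ this is immediate and needs no semiprimeness or Leibniz-rule argument (the hypothesis in Lemma \ref{lem.nilp} is in effect $x(L)\neq 0$, which you have directly).
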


\begin{proof}
Applying Lemma \ref{lem.nilp} to the left ideal $L$ and the semprime ring $R_1$,
we have that $x_1(L)\neq 0$ implies $x_1^{n_1-1}(L)$ is a left ideal of $R^{\grp{x_1}}$ that is not nilpotent.
Suppose for some $k\geq 1$ that $L_k$ is a left ideal of $R_k$ such that $x_{k+1}(L_k) \neq 0$.
As $R_{k+1}$ is semiprime, then applying Lemma \ref{lem.nilp} again we have that 
$x_{k+1}^{n_{k+1}-1}(L_k)$ is a left ideal of $R_{k+1}$ that is not nilpotent.
The result follows by induction.
\end{proof}

Recall that an ideal $I$ of $R$ is $B$-stable if $x_k(I) \subset I$ for all $k\in\{1,\hdots,\theta\}$ and $g(I) \subset I$ for all $g \in G$.

\begin{lemma}
\label{lem.ideals1}
\begin{enumerate}
\item If, after possibly reordering, each $R_k$ is semiprime, $k\in \{0,\hdots,\theta-1\}$, 
and $I \neq 0$ is a $B$-stable ideal of $R$, then $I^B$ is not nilpotent.
\item If $R$ is reduced and $A \neq 0$ is a $B$-stable subring of $R$, then $A^B$ is not nilpotent.
\end{enumerate}
\end{lemma}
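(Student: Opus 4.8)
The plan is to reduce both statements to Lemma \ref{lem.ideals0} by producing, inside the given $B$-stable ideal (resp. subring), a nonzero left ideal $L$ whose iterated top derivatives $L_k = x_k^{n_k-1}(L_{k-1})$ never die, so that $\bx(L)$ is a non-nilpotent left ideal of $R^x$ — and then observe that $\bx(L)$ lands inside $I^B$ (resp. $A^B$), forcing the latter to be non-nilpotent as well. For part (1), start with a nonzero $B$-stable ideal $I$. I would argue by contradiction: if $x_1$ kills $I$ entirely is impossible unless... — actually the cleaner route is to peel off one derivation at a time. Consider the chain $I \supseteq x_1^{?}(I) \supseteq \cdots$; since $I$ is $B$-stable, each $x_k$ maps $I$ into $I$, so all the intermediate objects $I_k := x_k^{n_k-1}(I_{k-1})$ (with $I_0 = I$) are $B$-stable left ideals (in fact ideals) contained in $I$. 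The key point is that none of them is zero: if $x_{k+1}(I_k) = 0$ for some $k$, I claim $I_k$ (hence $I$) would have to be nilpotent, contradicting $R_k$ semiprime combined with the fact that $x_{k+1}(I_k)=0$ puts $I_k$ inside $R^{\grp{x_{k+1}}}$ and a short commutator computation — exactly the $t<m$ argument reproduced in the commented-out proof of Lemma \ref{lem.nilp} — shows $(x_j^{\text{top}}(I_k))^2 = 0$ inside the semiprime ring $R_j$. Once all the $I_k \neq 0$ and $x_{k+1}(I_k) \neq 0$, Lemma \ref{lem.ideals0} applied to $L = I$ gives that $\bx(I)$ is a non-nilpotent left ideal of $R^x$. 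Since $I$ is also $G$-stable, $\bx(I) \subseteq R^G$ as well (because $\bx(I) \subseteq R^x$ and $G$ acts on the $x_i$ by characters, so the $G$-stability of $I$ is preserved), hence $\bx(I) \subseteq I^B$; a non-nilpotent subset inside $I^B$ forces $I^B$ non-nilpotent.

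For part (2), the hypothesis is stronger ($R$ reduced) but the object is only a $B$-stable \emph{subring} $A$, not an ideal, so one cannot form $RA$ or speak of left ideals of $R$ directly. Here the plan is to work entirely inside $A$: since $R$ is reduced, every $B$-stable subring $A$ is reduced, and reduced rings are semiprime, so all the relevant subrings $A \cap R_k$ are semiprime; moreover in a reduced ring the vanishing obstructions are cheaper — $a^2 = 0 \Rightarrow a = 0$ — so the inductive $t < m$ step collapses immediately. Concretely, I would run the same peeling argument on $A$: set $A_0 = A$, $A_{k} = x_k^{n_k-1}(A_{k-1})$; each is a $B$-stable subring of $R$ contained in $A$, each is nonzero (if $x_{k+1}(A_k)=0$ then $A_k \subseteq R^{\grp{x_{k+1}}}$ is $x_{k+1}$-fixed, and the commutator identity gives $x_{k+1}^{t-1}(A_k)^2 = 0$, hence $=0$ by reducedness, pushing the minimal $t$ down to $2$ and then to $0$, i.e. $A_k = 0$, contradicting $A \neq 0$ via $A_0 \neq 0$ — one has to be a little careful threading this, since $A_k$ being zero is what we want to rule out). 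Then $\bx(A) \subseteq A^B$ is non-nilpotent by the reduced analogue of Lemma \ref{lem.ideals0}; since $R$ is reduced, even a single nonzero element of $\bx(A)$ suffices.

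The main obstacle, and the step I would write out most carefully, is the verification that the intermediate objects $I_k$ (resp. $A_k$) are genuinely nonzero — i.e. reproving, in the iterated multi-variable setting, the single-variable non-nilpotency fact of Lemma \ref{lem.nilp}, whose proof is only sketched (commented out) in the text. The delicate points are: (i) that $x_k$ and $g \in G$ skew-commute, so that $x_k^{j}(L)$ remains a left ideal of $R^{\grp{x_k}}$ and remains $G$-stable; (ii) that the scalar $1 + \lambda_k + \cdots + \lambda_k^{t-1}$ is nonzero because $\ord(\lambda_k) = m_k$, which is what licenses the step $(x_k^{t-1}(L))^2 = 0$; and (iii) keeping straight which semiprime ring ($R_k$ vs.\ $R_{k+1}$ vs.\ $R^{\grp{x_{k+1}}}$) each nilpotent ideal sits in at each stage of the induction. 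None of these is deep, but the bookkeeping across $\theta$ steps is where an error would hide, so I would set up a single clean inductive statement ("for each $k$, $L_k$ is a nonzero $B$-stable left ideal of $R_k$ and $x_{k+1}(L_k) \neq 0$") and prove it in one pass, then quote Lemma \ref{lem.ideals0} for the conclusion.
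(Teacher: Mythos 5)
Your plan has two genuine gaps, and both are fatal to the reduction you propose.

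First, the containment $\bx(I)\subseteq I^B$ is false in general. By definition $I^B=I\cap R^x\cap R^G$, so membership in $I^B$ requires being \emph{fixed} by every $g\in G$. What $B$-stability of $I$ actually gives you is $\bx(I)\subseteq I\cap R^x=I^x$: the elements of $\bx(I)$ are typically $G$-eigenvectors with nontrivial eigenvalue (each $g$ skew-commutes with the $x_i$ through the characters $\chi_i$), not $G$-invariants. So exhibiting a non-nilpotent subset of $I^x$ does not touch $I^B$. The passage from ``$I^x$ is a non-nilpotent ideal of $R^x$'' to ``$(I^x)^G=I^B$ is not nilpotent'' is exactly where the paper invokes the Bergman--Isaacs theorem on fixed rings of finite group actions; some such fixed-ring theorem is unavoidable here, and it is entirely absent from your argument. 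The same omission breaks your part (2): a nonzero element of $\bx(A)$ lies in $A^x$, not in $A^B$.

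Second, the hypotheses of Lemma \ref{lem.ideals0} need not hold for an arbitrary $B$-stable ideal, and your claimed contradiction ``if $x_{k+1}(I_k)=0$ then $I_k$ is nilpotent'' is wrong. The $t<m$ computation behind Lemma \ref{lem.nilp} only shows that the \emph{minimal} $t$ with $x^t(L)=0$ cannot satisfy $2\le t<m$; it requires $x(L)\neq 0$ as an input and says nothing when $x(L)=0$ (then $L\subseteq R^{\grp{x}}$ and no contradiction arises). Concretely, the $x_i$ may act as zero on all of $R$; then every ideal is $B$-stable, $\bx(I)=0$, yet $I^B=I^G$ is non-nilpotent whenever $R$ is semiprime --- so non-nilpotency of $I^B$ cannot be routed through non-vanishing of $\bx(I)$. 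The paper's proof avoids images altogether: it works with the invariants $I_k=I\cap R_k$, quoting Bergen--Grzeszczuk (\cite[Theorem 5]{BergenG}) at each stage to get that $I_k^{\grp{x_{k+1}}}$ is a non-nilpotent ideal of $R_{k+1}$ (using that $x_{k+1}$ preserves $I_k$ by the skew-commutation of the $x_i$), and then applies Bergman--Isaacs to the $G$-action on $I^x$. For part (2) it notes $A^x\neq 0$ because the $x_i$ are nilpotent operators and $A$ is stable, observes $A^x$ is reduced, and again finishes with Bergman--Isaacs. Your proposal would need to import both of these external theorems (or reprove them), at which point the image-based detour through Lemma \ref{lem.ideals0} serves no purpose.
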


\begin{proof}
(1) As $I$ is $B$-stable, $x_1(I) \subset I$.
Since we assume that $R=R_0$ is semiprime, then it follows from \cite[Theorem 5]{BergenG} that  
$I_1 = I^{\grp{x_1}}$ is a non-nilpotent ideal of $R_1$.

Suppose inductively that $I_k = I \cap R_k$ is a non-nilpotent ideal of $R_k$.
Recall that $x_i \in B$ is a $g_i$-skew derivation for each $i \in \{1,\hdots,k\}$.
If $a \in I_k$, then for all $i \in \{1,\hdots,k\}$, \[x_i(x_{k+1}(a)) = \chi_{k+1}(g_i)x_{k+1}(x_i(a)) = 0.\]
Thus, $x_{k+1}(I_k) \subset I_k$.
As $R_k$ is semiprime then again by \cite[Theorem 5]{BergenG} we have that
$I_{k+1}=I_k^{\grp{x_{k+1}}}$ is a non-nilpotent ideal of $R_{k+1}$.
Thus, by induction $I^x = I_\theta$ is a non-nilpotent ideal of $R^x = R_\theta$.


The elements of $G$ skew-commute with the $x_i$,
so $I^x$ is a $G$-stable ideal of $R^x$.
By \cite{BI}, this implies that $I^B = (I^x)^G$ is not nilpotent.

(2) As $A$ is $B$-stable and each $x_i$ is nilpotent, then $A^x$ is nonzero. 
In this case, $R$ has no nonzero nilpotent elements and so the same must hold true for the subring $A^x$. Again applying results from \cite{BI}, we have $A^B = (A^x)^G$ is not nilpotent.
\end{proof}

A $B$-module algebra $R$ is said to be \emph{$B$-prime} 
if $IJ=0$ implies $I=0$ or $J=0$ for every pair of $B$-stable ideals $I,J$ of $R$.

\begin{lemma}
\label{lem.ideals2}
Suppose $R$ is reduced.
If $R^B$ is prime, then $R$ is $B$-prime.
\end{lemma}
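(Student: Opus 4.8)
I want to show: if $R$ is reduced and $R^B$ is prime, then $R$ is $B$-prime, i.e. for $B$-stable ideals $I, J$ of $R$ with $IJ = 0$ we must have $I = 0$ or $J = 0$. The natural strategy is contrapositive in spirit: assume $I \neq 0$ and $J \neq 0$, both $B$-stable, and derive that $IJ \neq 0$ by producing nonzero elements of $I^B$ and $J^B$ whose product is nonzero in $R^B$, contradicting primeness of $R^B$ (or rather, using primeness of $R^B$ to force a contradiction with $IJ = 0$).

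First I would note that since $R$ is reduced, any nonzero $B$-stable ideal $I$ of $R$ is in particular nonnilpotent, and moreover each $R_k$ is reduced (a subring of a reduced ring is reduced), hence semiprime; so the hypotheses of Lemma~\ref{lem.ideals1}(1) are automatically satisfied. Applying Lemma~\ref{lem.ideals1}(1) to $I$ gives that $I^B$ is a nonnilpotent — in particular nonzero — ideal of $R^B$, and likewise $J^B \neq 0$. Since $R^B$ is prime, $I^B J^B \neq 0$. But $I^B J^B \subseteq IJ$, so $IJ \neq 0$. This is the whole argument at the level of bookkeeping; the real content is entirely imported from Lemma~\ref{lem.ideals1} and the preservation of the reduced/semiprime property under passing to subrings.

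The one point that needs a little care is checking that the hypotheses of Lemma~\ref{lem.ideals1}(1) are genuinely met after the "possibly reordering" clause: Lemma~\ref{lem.ideals1}(1) wants each $R_k$ semiprime for $k \in \{0,\dots,\theta-1\}$, and in the reduced setting every $R_k$ is reduced hence semiprime regardless of the ordering, so no reordering is actually needed here and the conclusion $I^B$ nonnilpotent holds. I would also observe explicitly that $I^B$ and $J^B$ are honest ideals of $R^B$ — this is recorded in the preamble ("If $I$ is a $B$-stable ideal of $R$, then $I^B$ is an ideal of $R^B$") — which is what lets us invoke primeness of $R^B$ against them.

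**The main obstacle.** Frankly there is very little obstacle here: the lemma is essentially a formal corollary of Lemma~\ref{lem.ideals1}(1) together with the inclusion $I^B J^B \subseteq IJ$ and the fact that a nonnilpotent ideal is nonzero. If anything, the only thing to be slightly watchful about is the direction of the logic — one wants to make sure that "$R^B$ prime" combined with "$I^B, J^B$ nonzero ideals of $R^B$" correctly yields "$I^B J^B \neq 0$", and then that this contradicts a hypothetical $IJ = 0$; phrasing it cleanly as: given $B$-stable $I,J$ with $IJ = 0$, if both were nonzero then $I^B, J^B$ would be nonzero ideals of the prime ring $R^B$ forcing $0 \neq I^B J^B \subseteq IJ = 0$, absurd. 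So one of $I, J$ is zero, which is exactly $B$-primeness of $R$.
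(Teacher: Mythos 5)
Your proof is correct and follows essentially the same route as the paper: for nonzero $B$-stable ideals $I,J$, pass to $I^B,J^B$, note they are nonzero ideals of $R^B$, and use primeness of $R^B$ together with the inclusion $I^BJ^B \subseteq IJ$. The only cosmetic difference is that the paper cites Lemma~\ref{lem.ideals1}(2), which is stated precisely for reduced $R$, whereas you invoke part (1) after verifying its semiprimeness hypotheses (each $R_k$ is a subring of the reduced ring $R$, hence reduced, hence semiprime); both are valid.
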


\begin{proof}
Suppose $I,J$ are nonzero $B$-stable ideals of $R$.
Then $I^B,J^B$ are nonzero $B$-stable ideals of $R^B$ by Lemma \ref{lem.ideals1} (2).
Then $0 \neq I^BJ^B \subset IJ$ by the primeness of $R^B$.
Hence, $IJ \neq 0$ and so $R$ is $B$-prime.
\end{proof}

The next set of results generalize \cite[Lemma 6]{B}.

\begin{lemma}
\label{lem.stble}
For every two-sided ideal $I \neq 0$ of $R\# B$,
there exists a $B$-stable left ideal $L\neq 0$ of $R$ such that $Lt_\chi \subset I$,
for some $\chi \in \widehat{G}$.
\end{lemma}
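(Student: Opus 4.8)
The plan is to peel off the group algebra and the $x_i$-variables one at a time, exactly mirroring the structure $R\#B = S\#\kk G$ and $S = S_{\theta-1}\#\cR(G,\{g_\theta\},\{x_\theta\})$ used throughout Section~2, but now tracking a nonzero left ideal rather than a semiprimeness condition. First I would reduce to $S$: given $0\neq I \trianglelefteq R\#B$, decompose $I$ under the action of the idempotents $e_\chi$ of Lemma~\ref{lem.ideps}. Since $R\#B = \bigoplus_\chi (R\#S)e_\chi$ as a right module (using $\sum e_\chi = 1$ and orthogonality), some $Ie_\chi \neq 0$; because $e_\chi$ is central-up-to-character and $S$ commutes suitably with the $e_\chi$, the set $I \cap S$ (or an $e_\chi$-twisted version of it) is a nonzero two-sided ideal of $S$. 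Concretely, one picks an element of $I$ whose expansion involves $e_\chi$ and left-multiplies/right-multiplies by group elements to isolate it.

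Next, inside $S$ I would use the lexicographic leading-coefficient argument from the proof of Theorem~\ref{thm.sp1} (the skew Hilbert Basis Theorem setup). Viewing elements of $S$ as polynomials $\sum r_\alpha x^\alpha$ with $r_\alpha \in R$, ordered by $x_1 > \cdots > x_\theta$, I take an element $p \in I \cap S$ of \emph{maximal} multidegree whose top term is, ideally, as close to $\bx = \prod x_i^{m_i-1}$ as possible. The key move is: since $x_i^{m_i}=0$ in $B$, left-multiplying $p$ by suitable monomials $x^\beta$ pushes the leading term up toward $\bx$ while staying inside the ideal $I$. After this maneuver I obtain a nonzero element of $I$ of the form $\bx \cdot r + (\text{lower order terms})$ for some $r \in R$, or more precisely, by commuting the $x_i$ past coefficients using $gx_i = \chi_i(g)x_i g$ and the braiding relations, an element whose only surviving term is $r\,\bx$. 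Then $L := \{r \in R : r\,\bx \in I\}$ (or the left ideal it generates together with the leading coefficients obtained this way) is nonzero. One checks $L t_\chi \subset I$ for the appropriate character $\chi$ by tracking how $e_\chi$ rides along with $\bx$: since $e_\chi \bx = \bx e_{\chi'}$ for a shifted character (iterating Lemma~\ref{lem.ideps}(3)), the element $\bx$ carries a canonical idempotent, giving $t_\chi$.

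Finally I would verify $B$-stability of $L$. For $g \in G$: if $r\,\bx \in I$ then $g(r)\,\bx$ arises from $g \cdot (r\#\bx) \cdot g^{-1}$-type conjugation inside $R\#B$, which stays in $I$ since $I$ is two-sided — here one uses that $g \bx = (\text{scalar})\bx g$. For $x_k$: since $x_k \bx = 0$ in $B$ (as $\bx$ already contains $x_k^{m_k-1}$ and one more factor kills it), the commutator $[x_k, r\#\bx]$ inside $R\#B$ reduces to $x_k(r)\,\bx$ up to a scalar, which again lies in $I$; hence $x_k(r) \in L$. So $L$ is $B$-stable.

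\textbf{Main obstacle.} The delicate point is the second step: cleanly producing a single element of $I$ all of whose monomials are killed except the top one $r\,\bx$, while simultaneously controlling \emph{which} character $\chi$ attaches to it. The leading-term pushing must be done in the correct order (largest $x_i$ first) and one must be careful that left-multiplying by $x^\beta$ does not create cancellation or vanish identically — this is where the relations $x_i^{m_i}=0$, $x_i x_j = \chi_j(g_i) x_j x_i$, and the nonvanishing of the relevant Gaussian-binomial-type coefficients (as in the commented-out proof of Lemma~\ref{lem.nilp}) all have to cooperate. Keeping track of the group-element exponents that accumulate from $gx_i = \chi_i(g) x_i g$ is what ultimately pins down $\chi$, and getting that bookkeeping exactly right is the crux.
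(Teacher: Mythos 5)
There is a genuine gap, and it sits exactly where you flag your ``main obstacle.'' Your plan hinges on producing a nonzero element of $I$ of the form $a\,\bx$ (or $a t_\chi$) with $a \in R$ by left-multiplying a maximal-degree element of $I$ by monomials $x^\beta$ and then ``commuting the $x_i$ past coefficients'' so that only the top term survives. This does not work as stated: left multiplication by $x_i$ produces skew-derivation terms ($x_i r = g_i(r)x_i + x_i(r)$), so pushing the leading monomial up to $\bx$ never eliminates the lower-order part, and no amount of commuting kills it afterwards; moreover the coefficients you obtain live in $R\#\kk G$, not in $R$, and your sketch never reduces them to $R$. Your preliminary reduction is also flawed: $I \cap S$ can be zero for a nonzero ideal $I$ of $S\#\kk G$ (already for trivial actions, where ideals supported on $e_\chi$ with $\chi \neq \chi_0$ meet $S$ trivially), and the hedge about an ``$e_\chi$-twisted version'' is not made precise; in any case this reduction is unnecessary.

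The paper's proof sidesteps all of this with one clean move performed on the whole ideal rather than on a single element: choose a monomial $m = x_1^{\alpha_1}\cdots x_\theta^{\alpha_\theta}$ of \emph{maximal} degree with $Im \neq 0$. Maximality gives $(Im)x_i = 0$ for every $i$, and since right multiplication by $x_i$ introduces no derivation terms, this forces $Im \subset (R\#\kk G)\bx$ --- the lower-order terms are dead on arrival, no cancellation analysis or Gaussian coefficients needed. Then $\sum_\chi e_\chi = 1$ yields some $\chi$ with $Ime_\chi \neq 0$, Lemma \ref{lem.ideps}(3) gives $\bx e_\chi = e_{\chi'}\bx$, and the identity $(\kk G)e_{\chi'} = \kk e_{\chi'}$ collapses the group-algebra coefficients, so $I$ contains nonzero elements $a t_{\chi'}$ with $a \in R$; this is precisely the character bookkeeping your sketch leaves open. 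Setting $L = \{a \in R : a t_{\chi'} \in I\}$, the $B$-stability verification you outline (using $h t_{\chi'} = \chi'(h\inv)t_{\chi'}$ for $h \in G$ and $x_i\bx = 0$ for the skew-primitives) is correct and matches the paper. So your stability step is fine, but the existence of the nonzero left ideal $L$ with $Lt_\chi \subset I$ --- the heart of the lemma --- is not established by your argument.
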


\begin{proof}
Let $I$ be a nonzero ideal of $R\# B$ and let $m = x_1^{\alpha_1}\cdots x_\theta^{\alpha_\theta}$
be a monomial of maximal degree such that $Im \neq 0$.
As $x_i\bx = 0$ for all $i$ but $\bx \neq 0$, then $0 \neq Im \subset (R\#\kk G)\bx$.
Since $\sum_{\chi \in \widehat{G}} e_\chi = 1$, then there exists $\chi \in \widehat{G}$ such that 
\[ 0 \neq Ime_\chi \subset (R\#\kk G)\bx e_\chi.\]
By Lemma \ref{lem.ideps} (3), $\bx e_{\chi} = e_{\chi'}\bx$ for some $\chi' \in \widehat{G}$.
Thus, $(R\#\kk G)\bx e_\chi = (R\#\kk G)e_{\chi'}\bx = (R\#\kk G)t_{\chi'}$.
Orthogonality of the idempotents implies $(\kk G)e_{\chi'} = \kk e_{\chi'}$ and so
the elements of $(R\#\kk G)t_{\chi'}$ (and hence of $Ime_\chi$) are of the form $at_{\chi'}$ for $a \in R$.
It follows that $I$ contains nonzero elements of this form.

Hence, there exists $\chi \in \widehat{G}$ such that
$L = \{ a \in R : at_{\chi} \in I \}$
is a nonzero left ideal of $R$. We claim that $L$ is stable under the action of the $G$ and the $x_i$.
Let $a \in L$ and choose a corresponding $at_{\chi} \in I$. For $h \in G$, $\chi(h)hat_{\chi} \in I$
and so by Lemma \ref{lem.ideps}, 
\[ \chi(h)ha t_{\chi}
	= \chi(h)h(a)ht_{\chi}
	= h(a) \chi(h)\chi(h\inv) t_{\chi}
	= h(a) t_{\chi}.\]
Thus, $h(a) \in L$. Similarly, for each $x_i$,
\[ x_ia t_{\chi} = (x_ia)t_{\chi} = ( g_i(a)x_i + x_i(a))t_{\chi}
	= (g_i(a)e_{\chi \tensor \chi_i\inv}x_i\bx) + x_i(a)t_{\chi}
	= x_i(a)t_{\chi}.\]
Thus, $x_i(a) \in L$, completing the proof.
\end{proof}

As observed in the introduction, $t_0$ is a left integral of $B$.
Thus, for any $h \in B$ and $r \in R$, $hrt_0 = h(r)t_0$.
In particular, $t_\chi r t_0 = t_\chi(r)t_0$.

For each $\chi \in \widehat{G}$, define the map
$\pi_\chi: R\#B \rightarrow R\#B$ that is the identity on $R$ and the $x_i$, 
and $\pi_\chi(h)=\chi(h)h$ for all $h \in G$.
It is clear that $\pi_\chi$ restricts to an automorphism of $\kk G$ and to an automorphism of $B$ as 
it preserves the defining relations from Definition \ref{defn.boson}.
Moreover, for all $r \in R$, $\pi_\chi$ preserves the relations $gr=g(r)g$ for all $g \in G$
as well as $x_kr=g_k(r)x_k + x_k(r)$ for all $k \in \{1,\hdots,\theta\}$ and all $r \in R$.
Thus, $\pi_\chi$ extends linearly to an automorphism of $R\# B$.
If $\chi' \in \widehat{G}$, then
\[ \pi_\chi(e_{\chi'}) = \frac{1}{|G|} \sum_{g \in G} \chi'(g) \chi(g) g = e_{\chi' \tensor \chi}.\]
Consequently, if $\chi_0$ is the trivial character, then $\pi_{\chi\inv}(e_\chi) = e_0$ and $\pi_{\chi\inv}(t_{\chi})=t_{\chi_0}$.

\begin{theorem}
\label{thm.prime1}
The smash product $R\#B$ is prime if and only if $t_\chi(L)$ has zero left annihilator
for every $B$-stable left ideal $L \neq 0$ of $R$ and $\chi \in \widehat{G}$.
\end{theorem}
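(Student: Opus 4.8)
The plan is to prove each direction separately, using Lemma~\ref{lem.stble} as the main bridge between ideals of $R\#B$ and $B$-stable left ideals of $R$, together with the identity $t_\chi r t_0 = t_\chi(r) t_0$ coming from the fact that $t_0$ is a left integral.

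For the forward direction, suppose $R\#B$ is prime and let $L\neq 0$ be a $B$-stable left ideal of $R$ with some $\chi\in\widehat{G}$; I want to show $\ann(t_\chi(L)) = 0$ in $R$. Suppose $r\in R$ annihilates $t_\chi(L)$ on the left, i.e. $r\,t_\chi(a) = 0$ for all $a\in L$. The idea is to cook up two ideals of $R\#B$ whose product vanishes. On one side, consider the ideal $I_1$ generated by $L t_\chi$ (or rather $L t_0$ after applying the automorphism $\pi_{\chi\inv}$, which is harmless); on the other side, the right-hand factor built from $r$. Concretely, using $t_\chi r' t_0 = t_\chi(r')t_0$ and the fact that left multiplication by elements of $R$ and the $x_i$ keeps us inside expressions of the form (element of $R$)$\cdot t_0$, I expect that $(R\#B)\, r\, (R\#B)\cdot L t_0 = 0$ once $r$ annihilates $t_\chi(L)$ — because sandwiching $L t_0$ between arbitrary elements of $R\#B$ and then hitting it with $r$ reduces, via the integral property, to $r$ times things of the form $t_0(\text{stuff acting on }L)$, which the hypothesis on $L$ being $B$-stable controls. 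Primeness of $R\#B$ then forces $r=0$ (since $Lt_0\neq 0$ because $\bx\neq 0$ and the coefficient is a nonzero element of $R$). Making the bookkeeping with $\pi_\chi$ and the idempotent relations from Lemma~\ref{lem.ideps} precise is the fiddly part here.

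For the converse, suppose the annihilator condition holds and let $I, J$ be nonzero ideals of $R\#B$ with $IJ = 0$; I derive a contradiction. By Lemma~\ref{lem.stble} applied to $I$, there is a $B$-stable left ideal $L\neq 0$ of $R$ and $\chi\in\widehat G$ with $L t_\chi\subseteq I$; applying $\pi_{\chi\inv}$ (an automorphism of $R\#B$ carrying $I,J$ to ideals with the same product-zero property) we may assume $\chi=\chi_0$, so $L t_0\subseteq I$. Likewise $J$ contains some $L' t_{\chi'}$ for a $B$-stable left ideal $L'\neq 0$. Now $L t_0 \cdot L' t_{\chi'} \subseteq IJ = 0$. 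Using that $t_0$ is a left integral, $t_0 a = \varepsilon$-type collapse: more precisely, for $a\in L'$, $t_0 a\, t_{\chi'}$... here I instead multiply on the right cleverly: pick the piece $t_0 \cdot L' t_{\chi'}$ and note $t_0 r = (\text{lower-degree terms})$; the cleanest route is to observe $L t_0 \cdot L' t_{\chi'} = 0$ and multiply this on the left by $t_\chi$ or on the right by $t_0$ to isolate a product $t_\chi(L')\cdot(\text{something})\, t_0$ — using $t_\chi r t_0 = t_\chi(r)t_0$ — landing on an expression of the form $a\, t_\chi(b)\, t_0 = 0$ for all $a\in L$, $b\in L'$. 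Since $t_0\neq 0$ with leading coefficient $1$, this gives $a\, t_\chi(b) = 0$ in $R$, i.e. every element of $L$ annihilates $t_\chi(L')$. Because $L\neq 0$, the set $t_{\chi'}(L')$ (for suitable relabeling of characters) has nonzero left annihilator, contradicting the hypothesis.

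The main obstacle I anticipate is the careful tracking of which character appears where: Lemma~\ref{lem.ideps}(3) shuffles $e_\chi$ past $x_i$ with a twist by $\chi_i$, so $\bx e_\chi = e_{\chi'}\bx$ with $\chi'$ a specific product of $\chi$ and the $\chi_i^{\pm}$, and one must make sure the $B$-stability of $L$ (established in Lemma~\ref{lem.stble}) is genuinely used so that $t_\chi(L)$ is itself a well-behaved subset of $R$ — indeed $B$-stability guarantees $t_\chi(L)\subseteq L$, so its left annihilator being nonzero is a condition internal to $R$. I would also need the elementary observation, which I would state as a preliminary remark, that $Lt_\chi\neq 0$ whenever $L\neq 0$ (again because the top-degree monomial $\bx$ is nonzero and appears with an invertible idempotent coefficient), so that none of the ideals produced degenerate. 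The rest is the two-sided-ideal manipulation above, which is routine once the integral identity $t_\chi r t_0 = t_\chi(r) t_0$ is in hand.
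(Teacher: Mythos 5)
Your overall architecture (Lemma~\ref{lem.stble}, the automorphisms $\pi_\chi$, and the integral identity $t_\chi r t_0 = t_\chi(r)t_0$) is the same as the paper's, but both directions break at the decisive computation. In the forward direction, the central claim $(R\#B)\,r\,(R\#B)\cdot Lt_0=0$ is false as stated: taking the middle factor to be $1$, it asserts $rLt_0=0$, i.e.\ $rL=0$, which does not follow from the hypothesis $r\,t_\chi(L)=0$, and no amount of sandwiching brings a $t_\chi$ into play if the annihilating element is just $r$. The element to use is $rt_\chi$: since $L$ is $B$-stable, $h\,a\,t_{\chi_0}=h(a)\,t_{\chi_0}$ for $h\in G$ and $x_i\,a\,t_{\chi_0}=x_i(a)\,t_{\chi_0}$ for $a\in L$ (because $x_ie_{\chi_0}\bx=e_{\chi_i\inv}x_i\bx=0$), so $Lt_{\chi_0}$ is a nonzero left ideal of $R\#B$ --- this, not ``$t_\chi(L)\subseteq L$'', is where $B$-stability is actually needed --- and then $(rt_\chi)(Lt_{\chi_0})=r\,t_\chi(L)\,t_{\chi_0}=0$, so primeness (zero left annihilator of a nonzero left ideal) gives $rt_\chi=0$, hence $r=0$. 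No $\pi_\chi$ is needed in this direction.

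In the converse you normalize the wrong factor. With $Lt_0\subseteq I$ and $L't_{\chi'}\subseteq J$, the product $(Lt_0)(L't_{\chi'})$ does \emph{not} collapse via $t_\chi r t_0=t_\chi(r)t_0$: that identity requires the left integral $t_0$ on the right, and $t_{\chi'}$ is not a left integral (grouplikes act on it by $\chi'(g\inv)$). Your proposed repairs also fail: right multiplication by $t_0$ gives no information because $t_{\chi'}t_0=e_{\chi'}\bx e_{\chi_0}\bx=0$ (move $\bx$ past $e_{\chi_0}$ and use $x_i^{m_i}=0$), and left multiplication by $t_\chi$ does not isolate an expression $a\,t_\chi(b)\,t_0$. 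The paper's normalization fixes this: apply $\pi$ so that the piece inside the \emph{right-hand} ideal $J$ is $Lt_{\chi_0}$, keeping $L't_{\chi'}\subseteq I$; then $(L't_{\chi'})(Lt_{\chi_0})=L'\,t_{\chi'}(L)\,t_{\chi_0}\subseteq IJ=0$, and since $bt_{\chi_0}=0$ with $b\in R$ forces $b=0$, you get $L'\,t_{\chi'}(L)=0$ with $L'\neq 0$, contradicting the annihilator hypothesis. (One can salvage your normalization by showing, via a coproduct computation, that $(Lt_0)(L't_{\chi'})=L\,t_{(\chi')\inv}(L')\,t_{\chi'}$, but that is a different identity from the one you cite and would need its own proof.)
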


\begin{proof}
Assume $R\# B$ is prime. 
Let $L\neq 0$ be a $B$-stable left ideal of $R$ and let $\chi \in \widehat{G}$.
Suppose $a \in R$ such that $at_\chi(L)=0$.
By Lemma \ref{lem.ideps} (2,3), $Le_{\chi_0}\bx$ is a nonzero left ideal of $R\# B$.
Then for any $r \in R$, $t_\chi r t_{\chi_0} = t_\chi(r) t_{\chi_0}$. Thus,
$(a t_\chi)(L t_{\chi_0}) = at_\chi(L) t_{\chi_0} = 0$,
whence $at_\chi$ is the left annihilator of $Le_{\chi_0}\bx$.
By primeness, $at_\chi = 0$ so $a=0$.

Conversely, assume $R\#B$ is not prime and let $I,J$ be nonzero
ideals in $R\# B$ such that $IJ=0$.
By Lemma \ref{lem.stble}, there exists nonzero $B$-stable left ideals $L,L'$ of $R$
and $\chi,\chi' \in \widehat{G}$ such that $Lt_\chi \subset J$ and $L't_{\chi'} \subset I$.
Applying $\pi_{\chi\inv}$ to the first inclusion gives $Lt_{\chi_0} \subset \pi_{\chi\inv}(J)$.
Since $\pi_{\chi\inv}$ is an automorphism, then $\pi_{\chi\inv}(I)\pi_{\chi\inv}(J)=0$ so there 
is no loss in assuming $Lt_{\chi_0} \subset J$. Now
\[ L' t_{\chi'}(L)t_{\chi_0} = (L't_{\chi'})(Lt_{\chi_0}) \subset IJ = 0.\]
This completes the proof as $L' \neq 0$ and $L'$ lies in the annihilator of $t_{\chi'}(L)$.
\end{proof}

\begin{corollary}
\label{cor.prime2}
Let $R$ be a reduced $B$-module algebra. The following are equivalent.
\begin{enumerate}
\item $R\# B$ is prime.
\item $R^B$ is prime and $t_\chi(R) \neq 0$ for all $\chi \in \widehat{G}$.
\item $R$ is $B$-prime and $t_\chi(R) \neq 0$ for all $\chi \in \widehat{G}$.
\item $t_\chi(I)$ has zero left annihilator in $R$ for every nonzero $B$-stable ideal $I$ of $R$
and every $\chi \in \widehat{G}$.
\end{enumerate}
\end{corollary}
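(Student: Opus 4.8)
The plan is to prove a cycle of implications: (1) $\Rightarrow$ (4) $\Rightarrow$ (3) $\Rightarrow$ (2) $\Rightarrow$ (1), drawing on Theorem \ref{thm.prime1} and the structural lemmas on $B$-stable ideals (Lemmas \ref{lem.ideals1} and \ref{lem.ideals2}) that precede the corollary. The implication (1) $\Rightarrow$ (4) is essentially immediate from Theorem \ref{thm.prime1}: if $R\#B$ is prime, then $t_\chi(L)$ has zero left annihilator for \emph{every} nonzero $B$-stable left ideal $L$, in particular for every nonzero $B$-stable two-sided ideal $I$; and taking $L = I = R$ (which is certainly $B$-stable) shows $t_\chi(R)$ has zero left annihilator, so in particular $t_\chi(R) \neq 0$.

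For (4) $\Rightarrow$ (3): suppose $I,J$ are nonzero $B$-stable ideals with $IJ = 0$. Then $J \subseteq \ann(I)$, and I would argue that $\ann(I)$ being nonzero forces $t_\chi(I)$ to have a nonzero left annihilator for some $\chi$ — here I need to connect the left annihilator of the \emph{ideal} $I$ with the left annihilator of the \emph{set} $t_\chi(I)$. The point is that $t_\chi(I) \subseteq I$ (since $I$ is $B$-stable and $t_\chi \in B$), so $J \cdot t_\chi(I) \subseteq JI$; to get $JI = 0$ from $IJ = 0$ when $R$ is reduced, I invoke that in a reduced ring $IJ = 0 \iff JI = 0$ for ideals (since $(JI)^2 = J(IJ)I = 0$ and reduced rings are semiprime, so $JI = 0$). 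Hence $J$ is a nonzero left annihilator of $t_\chi(I)$ for every $\chi$, contradicting (4); so no such pair exists and $R$ is $B$-prime. The hypothesis $t_\chi(R) \neq 0$ is part of (4) via the case $I = R$ as above, so it carries over. The implication (3) $\Rightarrow$ (2) is Lemma \ref{lem.ideals2} read correctly: that lemma says $R^B$ prime implies $R$ is $B$-prime, which is the wrong direction — so instead I would prove (3) $\Rightarrow$ (2) directly, or better, reorganize as (2) $\Rightarrow$ (3) via Lemma \ref{lem.ideals2} and close the cycle differently.

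Let me restructure: prove (1) $\Rightarrow$ (4) $\Rightarrow$ (3), then (3) $\Rightarrow$ (1) to close one loop, and handle (2) by showing (2) $\Rightarrow$ (3) (trivial, since $R^B$ prime $\Rightarrow$ $R$ is $B$-prime by Lemma \ref{lem.ideals2}, and $t_\chi(R)\neq 0$ is shared) together with (1) $\Rightarrow$ (2). For (1) $\Rightarrow$ (2): if $R\#B$ is prime, I must show $R^B$ is prime. Given nonzero ideals $\bar I, \bar J$ of $R^B$ with $\bar I\bar J = 0$, I would want to lift these to $B$-stable ideals of $R$ — but the natural lift $R\bar I R$ need not have invariant-part equal to $\bar I$. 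A cleaner route: use that $t_0(R)\subseteq R^B$ (since $t_0$ is a left integral, $g t_0 = t_0$ and $x_i t_0 = 0$, so $t_\chi(r)$ lands in the $\chi$-eigenspace, and $t_0(r) \in R^B$), and the earlier identity $t_\chi r t_0 = t_\chi(r) t_0$. The main obstacle I anticipate is precisely the implication involving $R^B$: translating primeness of the fixed ring to the ideal-theoretic condition on $R$ requires either Lemma \ref{lem.ideals1}(2) (for the easy direction, fixed parts of nonzero $B$-stable subrings are nonzero and non-nilpotent) or a Morita-type / trace argument for the converse. Given the lemmas available, I would prove the cycle (1) $\Leftrightarrow$ (4) (Theorem \ref{thm.prime1} applied to ideals plus the reduced hypothesis to pass from left ideals to ideals), (4) $\Leftrightarrow$ (3) (the $IJ = 0 \iff JI = 0$ trick above), and (3) $\Leftrightarrow$ (2) (Lemma \ref{lem.ideals2} one way; for the other, if $R$ is $B$-prime and $\bar I\bar J = 0$ in $R^B$, form $I = R\bar I R$, $J = R \bar J R$, note these are $B$-stable since $\bar I,\bar J \subseteq R^B$ are fixed, and show $IJ = 0$ using that in a reduced ring the product of the "sandwiched" ideals vanishes when the cores multiply to zero), which should suffice to close all four conditions into a single equivalence.
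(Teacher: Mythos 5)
Your implications $(1)\Rightarrow(4)$ and $(4)\Rightarrow(3)$ are fine, and your $(3)\Leftrightarrow(2)$ plan also works: Lemma \ref{lem.ideals2} gives one direction, and for the other the sandwich $I=R\bar I R$, $J=R\bar J R$ is indeed $B$-stable (since $\bar I,\bar J\subseteq R^B$ kills the $x_i(a)$ and $g(a)-a$ terms) and $IJ=0$ follows from the fact that in a reduced ring $ab=0$ implies $aRb=0$; this is a legitimately different route from the paper, which instead proves $(1)\Rightarrow(2)$ directly by applying Theorem \ref{thm.prime1} to the $B$-stable left ideal $Rb$ and the integral $t_0$.

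The genuine gap is the return to $(1)$. Whichever way you close the cycle, you must verify the hypothesis of Theorem \ref{thm.prime1}: that $t_\chi(L)$ has zero left annihilator for every nonzero $B$-stable \emph{left} ideal $L$, whereas $(4)$ only speaks of two-sided $B$-stable ideals. Your parenthetical ``the reduced hypothesis to pass from left ideals to ideals'' is not an argument: $t_\chi$ is not right $R$-linear, so knowing $a\,t_\chi(L)=0$ gives no control over $a\,t_\chi(LR)$ (the coproduct of $\bx$ scatters the action across both factors), and conversely the condition on the two-sided ideal $LR$ does not descend to $L$. The paper bridges exactly this point with an external input you never invoke, Yanai's extension theorem for automorphic-differential identities \cite[Theorem 2]{yanai}: it first shows $\ann(L)=0$ by an annihilator argument ($U=\ann(L)$ and $V=\ann(U)$ are two-sided $B$-stable ideals in the reduced ring $R$, and $U\,t_\chi(V)\subset UV=0$ together with $(4)$ applied to $V$ forces $U=0$), and then uses Yanai to promote the identity $a\,t_\chi(L)=0$ on the left ideal $L$ to $a\,t_\chi(R)=0$ on all of $R$, at which point $(4)$ with $I=R$ gives $a=0$. (The same citation also underlies the paper's $(3)\Rightarrow(4)$.) Without this identity-extension result, or a substitute for it, your cycle does not close, and this is the heart of the corollary rather than a routine reduction.
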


\begin{proof}
$(1) \Rightarrow (2)$ 
Let $a,b \in R^B$ be nonzero. Then $Rb$ is a nonzero $B$-stable left ideal of $R$.
By Theorem \ref{thm.prime1}, $at_0(Rb) \neq 0$ and
$0 \neq at_0(Rb) = at_0(R)b \subset aR^B b$.
Thus, $aR^Bb \neq 0$ and so $R^B$ is prime.
Moreover, by Theorem \ref{thm.prime1}, $t_\chi(R) \neq 0$ for all $\chi \in \widehat{G}$.

$(2) \Rightarrow (3)$ This follows from Lemma \ref{lem.ideals2}.

$(3) \Rightarrow (4)$
Let $I$ be a nonzero $B$-stable ideal of $I$.
It suffices to prove that $t_\chi(I) \neq 0$ for all $\chi \in \widehat{G}$.
By \cite[Theorem 2]{yanai}, if $t_\chi$ vanishes on $I$ then it vanishes on all of $R$.
The hypothesis implies this cannot happen.

$(4) \Rightarrow (1)$
Let $L$ be a nonzero $B$-stable left ideal of $R$ and $\chi \in \widehat{G}$.
By Theorem \ref{thm.prime1}, it suffices to prove that $t_\chi(L)$
has zero left annihilator in $R$.
The left/right annihilators are equal by hypothesis and so are two-sided ideals.
Let $U=\ann(L)$ and $V=\ann(U)$.
Since $U$ and $V$ are $B$-stable and $0 \neq L \subset V$, then
\[ Ut_\chi(V) \subset UV = 0,\]
so $U \subset \ann(t_\chi(V))$ and thus $U=0$.
Again by \cite[Theorem 2]{yanai}, if $at_\chi(L)=0$, then $at_\chi(R)=0$.
Thus, our hypothesis implies $a=0$. That is, $\ann(t_\chi(L))=0$, so $R\#B$ is prime.
\end{proof}




We now simplify the criteria above in the case that the base ring is a domain.

\begin{corollary}
\label{cor.prime3}
Let $R$ be a $B$-module algebra that is a domain. The following are equivalent.
\begin{enumerate}
\item $R\#B$ is prime.
\item $t_\chi(R) \neq 0$ for all $\chi \in \widehat{G}$.
\item After possibly reordering $x_{k+1}(R_k) \neq 0$ for $k \in \{0,\hdots,\theta-1\}$
and for each $\chi \in \widehat{G}$ there exists a nonzero $a \in R^x$ such that $g(a) = \chi(g\inv) a$ for all $g \in G$.
\end{enumerate}
\end{corollary}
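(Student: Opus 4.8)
The plan is to establish the chain $(1)\Rightarrow(2)\Rightarrow(3)\Rightarrow(1)$, leaning on Corollary~\ref{cor.prime2} and Theorem~\ref{thm.prime1}, and using that a domain is in particular reduced. The implication $(1)\Rightarrow(2)$ is immediate: since $R$ is a domain it is reduced, so Corollary~\ref{cor.prime2} ($(1)\Rightarrow(2)$) already gives $t_\chi(R)\neq 0$ for all $\chi\in\widehat G$ (indeed it gives the stronger statement that $R^B$ is prime, but we only need the nonvanishing).

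For $(2)\Rightarrow(3)$, recall $t_\chi = e_\chi\bx$ and $\bx = \prod_{i=1}^\theta x_i^{m_i-1}$, while $e_\chi\in\kk G$ acts on $R$ as a projection onto the eigenspace $\{a\in R : g(a)=\chi(g\inv)a \text{ for all }g\in G\}$ (this uses Lemma~\ref{lem.ideps}(2): for $a$ in that eigenspace, $e_\chi(a) = \tfrac1{|G|}\sum_g \chi(g)g(a) = \tfrac1{|G|}\sum_g \chi(g)\chi(g\inv)a = a$, and $e_\chi$ kills the other isotypic components). Thus $t_\chi(R)\neq 0$ forces two things. First, the iterated skew-derivation $\bx$ must be nonzero on $R$; since $R$ is a domain and each $x_k$ is a $g_k$-skew derivation commuting (up to the scalars $\chi_{k}(g_i)$) with the others, $\bx(R)\neq 0$ is equivalent, after reordering, to $x_{k+1}(R_k)\neq 0$ for $k\in\{0,\dots,\theta-1\}$. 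To see this equivalence I would argue inductively: if $x_1^{m_1-1}\cdots x_\theta^{m_\theta-1}$ is not identically zero on $R$, then in particular $x_\theta(R)\neq 0$, and then on the domain $R^{\grp{x_\theta}}=R_{\theta-1}$ (which is $B$-stable in the relevant sense and still a domain) the remaining product $x_1^{m_1-1}\cdots x_{\theta-1}^{m_{\theta-1}-1}$ is nonzero, so we may reorder and recurse; the converse direction follows from Lemma~\ref{lem.ideals0} applied to $L=R$, which shows $\bx(R)$ is a non-nilpotent (in particular nonzero) ideal of $R^x$ whenever the cascade of nonvanishing conditions holds — here one uses that $R$ being a domain makes each $R_k$ semiprime automatically. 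Second, since $\bx(R)\subset R^x$ and then $e_\chi$ is applied (after the character twist of Lemma~\ref{lem.ideps}(3) relating $e_\chi\bx$ to $\bx e_{\chi'}$), the nonvanishing of $e_\chi$ on $R^x$ means the $\chi$-isotypic component of $R^x$ under $G$ is nonzero, i.e.\ there is a nonzero $a\in R^x$ with $g(a)=\chi(g\inv)a$ for all $g\in G$. One has to be a little careful about which character index appears after the twist, but since the statement quantifies over all $\chi\in\widehat G$ and $\chi\mapsto\chi'$ is a bijection of $\widehat G$, the condition is the same either way.

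For $(3)\Rightarrow(1)$, I would run the argument of $(2)\Rightarrow(3)$ in reverse to recover $t_\chi(R)\neq 0$ for every $\chi$: the cascade $x_{k+1}(R_k)\neq 0$ gives $\bx(R)\neq 0$ via Lemma~\ref{lem.ideals0} (with $L=R$), in fact a nonzero $G$-stable subspace of $R^x$; intersecting with the $\chi$-isotypic component — which is nonzero by the eigenvector hypothesis — and invoking that $R$ is a domain (so the relevant product of a nonzero element of $R^x$ with the nonzero output of $\bx$ survives, or more simply that $e_\chi$ does not annihilate the whole of the non-nilpotent ideal $\bx(R)$ whose $G$-invariants $(\bx(R))^G = (R^B\text{-part})$ are nonzero by Lemma~\ref{lem.ideals1}) yields $t_\chi(R)\neq 0$. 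Then, since $R$ is reduced, Corollary~\ref{cor.prime2} ($(2)\Rightarrow(1)$ or the equivalence there) applies provided we also know $R^B$ is prime; but $R^B$ is a subring of the domain $R$ (and $R^B\ni 1$), hence itself a domain, hence prime. Therefore $R\#B$ is prime.

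The main obstacle I expect is the bookkeeping in $(2)\Leftrightarrow(3)$: carefully separating $t_\chi = e_\chi\bx$ into its ``skew-derivation part'' $\bx$ and its ``group-averaging part'' $e_\chi$, justifying that on a domain the former is nonzero iff the reordered cascade $x_{k+1}(R_k)\neq 0$ holds (this requires the inductive descent through the subrings $R_k$ and the observation that each $R_k$ inherits being a domain and being stable under the remaining $x_j$'s and all of $G$), and tracking the character twist from Lemma~\ref{lem.ideps}(3) so that the eigenvector condition is stated for the correct $\chi$. Everything else is a direct appeal to Corollary~\ref{cor.prime2}, Lemma~\ref{lem.ideals0}, and Lemma~\ref{lem.ideals1}, together with the trivial fact that subrings of domains are domains.
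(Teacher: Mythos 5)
Your outline of $(1)\Leftrightarrow(2)$ (via Corollary \ref{cor.prime2}, noting $R^B$ is a subring of a domain, hence prime) and of $(2)\Rightarrow(3)$ (splitting $t_\chi=e_\chi\bx$, reading off the cascade $x_{k+1}(R_k)\neq 0$ from $\bx(R)\neq 0$, and using that $e_\chi$ projects onto the eigenspace $\{a: g(a)=\chi(g\inv)a\}$ as in \eqref{eq.char}) agrees with the paper; the ``character twist'' you worry about never arises, since in $t_\chi=e_\chi\bx$ the averaging operator is applied last. The genuine gap is in $(3)\Rightarrow(1)$, at the one step the paper actually spends a citation on. You need $t_\chi(R)=e_\chi(\bx(R))\neq 0$, i.e.\ that the $G$-stable left ideal $\bx(R)$ of $R^x$ has a \emph{nonzero $\chi\inv$-isotypic component}. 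The hypothesis only supplies a nonzero eigenvector somewhere in $R^x$; ``intersecting with the $\chi$-isotypic component'' is exactly what must be proved, not a consequence. Worse, your ``more simply'' fallback is false for nontrivial $\chi$: $e_\chi$ \emph{annihilates} $G$-invariant elements, since for $w\in(\bx(R))^G$ one has $e_\chi(w)=\tfrac{1}{|G|}\bigl(\sum_{g\in G}\chi(g)\bigr)w=0$ when $\chi\neq\chi_0$, so the non-nilpotency of $(\bx(R))^G$ from Lemma \ref{lem.ideals1}(2) only gives $t_{\chi_0}(R)\neq 0$. The paper closes this step with \cite[Theorem 6]{yanai}: since $R^x$ is a domain and $\bx(R)$ is a nonzero left ideal of it (Lemma \ref{lem.ideals0}), $e_\chi$ vanishes on $\bx(R)$ if and only if it vanishes on all of $R^x$, and the eigenvector shows it does not vanish on $R^x$.

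Your domain idea can in fact be completed elementarily, but you did not do so: take $0\neq w\in(\bx(R))^G$ (Lemma \ref{lem.ideals1}(2)) and the hypothesized eigenvector $a\in R^x$ with $g(a)=\chi(g\inv)a$; then $aw\in\bx(R)$ because $\bx(R)$ is a left ideal of $R^x$, $aw\neq 0$ because $R$ is a domain, and $g(aw)=\chi(g\inv)aw$, so $e_\chi(aw)=aw\neq 0$ by \eqref{eq.char}, giving $t_\chi(R)\neq 0$ without Yanai's theorem. As written, however, neither of your two suggested justifications (the unspecified ``relevant product'' or the $G$-invariants of $\bx(R)$) establishes the needed nonvanishing, so the step is a genuine gap rather than a stylistic difference from the paper.
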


\begin{proof}
$(1) \Leftrightarrow (2)$ This follows by Corollary \ref{cor.prime2}.

$(2) \Rightarrow (3)$ Let $\chi \in \widehat{G}$, so $t_\chi(R) \neq 0$.
Since $t_\chi = e_\chi \bx$, then  $\bx(R) \neq 0$ and so $x_{k+1}(R_k) \neq 0$ for $k \in \{0,\hdots,\theta-1\}$.
Applying Lemma \ref{lem.ideals0} with $L=R$ gives that $\bx(R)$ is a left ideal of $R^x$ that is not nilpotent, whence $R^x\neq 0$. As the elements of $G$ skew-commute with the $x_i$, then $G$ acts on $R^x$.
For $\psi \in \widehat{G}$ we define 
$(R^x)_\psi = \{ r \in R : g(r)=\psi(r)r \text{ for all } g\in G\}$.
By \cite{pass2},
\[ R^x = \bigoplus_{\psi \in \widehat{G}} (R^x)_\psi.\]
Now if $a \in (R^x)_\psi$ for some $\psi \in \widehat{G}$, then 
\begin{align}
\label{eq.char}
e_\chi(a)
	= \frac{1}{|G|} \sum_{g \in G} \chi(g) g(a)
	= \frac{1}{|G|} \sum_{g \in G} \chi(g) \psi(g) a
	= \frac{1}{|G|} \sum_{g \in G} (\chi \tensor \psi)(g) a.
\end{align}
It follows from standard abelian group character theory that this sum is nonzero if and only if $\psi = \chi\inv$.
Since we assume that $t_\chi(R) \neq 0$, then it follows 
that there exists $a \in (R^x)_{\chi\inv}$, $a \neq 0$, such that $g(a)=\chi(g\inv) a$ for all $g \in G$.

$(3) \Rightarrow (2)$ 
Let $\chi \in \widehat{G}$ and assume that there exists a nonzero
$a \in R^x$ such that $g(a) = \chi(g\inv) a$ for all $g \in G$.
After reordering, $x_{k+1}(R_k) \neq 0$ for $k \in \{0,\hdots,\theta-1\}$.
Then again $\bx(R)$ is a nonzero left ideal of $R^x$ by Lemma \ref{lem.ideals0}.
Since $R^x$ is a domain, then by \cite[Theorem 6]{yanai}, $t_\chi$ vanishes on $R^x$
if and only if it vanishes on $\bx(R)$. As
$t_\chi(R) = e_\chi(\bx(R))$,
then to show $t_\chi \neq 0$ we need only show that $e_\chi$ does not vanish on $R^x$.
Now \eqref{eq.char} shows that $e_\chi(a) \neq 0$.
Consequently $t_\chi \neq 0$.
\end{proof}


\begin{example}
\label{ex.spnp}
Let $A$ and $B$ be as in Example \ref{ex.sp} and recall that $A\# B$ is semiprime.
By computations in that example,
$A^{\grp{x_2}} = \kk[u_1,u_2,u_3^n]$ and $x_1(u_2) = u_1 \neq 0$, so $x_1(A^{\grp{x_2}}) \neq 0$,
thus we have satisfied the first part of Corollary \ref{cor.prime3} (3).
However, we note that $A^x=\kk[u_1,u_2^n,u_3^n]$ and so $g_1(a)=g_2(a)$ for all $a \in A^x$.
It follows that the second part of Corollary \ref{cor.prime3} (3) cannot be satisfied.
Simply choose a character $\chi \in \widehat{G}$ such that $\chi(g_1\inv) \neq \chi(g_2\inv)$.
Hence, $A\# B$ is not prime.
\end{example}

\begin{example}
Let $\tornado$ be a primitive third root of unity and let
$A=\kk_\bp[u_1,u_2,u_3,u_4]$ with
\[
\bp = \left(\begin{smallmatrix}
1 			& \tornado		& \tornado		& 1 \\
\tornado^2	& 1			& \tornado^2	& \tornado			\\
\tornado^2	& \tornado		& 1			& \tornado		\\
1			& \tornado^2	& \tornado^2	& 1
\end{smallmatrix}\right).
\]
Let $G=\grp{g_1} \times \grp{g_2} \iso \ZZ_n \times \ZZ_n$ and consider
the rank 2 QLS $B=B(G,\gu,\cu)$ with data
\[ 
\chi_1(g_1) = \tornado, \quad \chi_1(g_2)=\tornado, \quad 
\chi_2(g_1)=\tornado^2, \quad \chi_2(g_2)=\tornado^2.
\]
Then $B$ acts on $G$ by setting 
$g_1 = \diag(\tornado, 1, 1, \tornado^2)$, 
$g_2 = \diag(1, \tornado^2, \tornado^2, \tornado)$,
$x_1(u_3)=u_1$, $x_2(u_4)=u_1$, and $x_i(u_j)=0$ for all other $i,j$.

Now, as above, we have $A^{\grp{x_1}}=\kk[u_1,u_2,u_3^3,u_4]$.
Since $x_2(u_4)=u_1$, then we have $x_2(A^{\grp{x_1}}) \neq 0$.
Moreover, $A^x=\kk[u_1,u_2,u_3^3,u_4^3]$. Now, 
$g_1(u_1^k u_2^\ell) = \tornado^k u_1^k u_2^\ell$ while
$g_2(u_1^k u_2^\ell) = \tornado^{2\ell} u_1^k u_2^\ell$.
Thus, given $\chi \in \widehat{G}$, we can set $a=u_1^k u_2^\ell$ and we need only choose 
an appropriate value of $k$ and $\ell$ so that $\chi(g_1\inv)=g_1(a)$ and $\chi(g_2\inv)=g_2(a)$.
It follows now from Corollary \ref{cor.prime3} that $A\# B$ is prime.
\end{example}

\subsection*{Acknowledgment}
The author was partially supported by a grant from the 
Miami University Senate Committee on Faculty Research.

\bibliographystyle{plain}

\end{document}